\documentclass[11pt,reqno]{amsart}

%     If your article includes graphics, uncomment this command.
\usepackage{amsthm}
\usepackage[usenames,dvipsnames]{pstricks}
\usepackage{epsfig}
\usepackage{pst-grad} % For gradients
\usepackage{pst-plot} % For axes
\usepackage{graphicx,subfigure}
\usepackage{amsmath,amssymb}

\usepackage{acronym}
\acrodef{BO}{{\sl Benjamin-Ono}}
\acrodef{rBO}{{\sl regularized Benjamin-Ono}}
\acrodef{rILW}{{\sl regularized Intermediate Long Wave}}
\acrodef{DSW}{{\sl Dispersive Shock Wave}}
\acrodef{DSWs}{{\sl Dispersive Shock Waves}}
\acrodef{ILW}{{\sl Intermediate Long Wave}}
\acrodef{CGN}{{\sl Conjugate Gradient-Newton}}
\acrodef{SW/SW}{{\sl Shallow water / Shallow water}}
\acrodef{B/B}{{\sl Boussinesq / Boussinesq}}

\makeatletter
\newcommand{\sech}{\mathop{\operator@font sech}}
\newcommand{\sign}{\mathop{\operator@font sign}}
\makeatother

\newtheorem{lemma}{Lemma}[section]
\newtheorem{theorem}{Theorem}[section]

\newtheorem{remark}{Remark}[section]

\numberwithin{equation}{section}

%    Absolute value notation

%    Blank box placeholder for figures (to avoid requiring any
%    particular graphics capabilities for printing this document).

%% The amssymb package provides various useful mathematical symbols
\usepackage{multirow}
\usepackage{amssymb}
\usepackage{amsbsy}
\usepackage{amscd}
\usepackage{amsfonts}
\usepackage{amsmath}
\usepackage{amssymb}
\usepackage{amstext}
\usepackage{amsthm}
\usepackage{enumerate}
\usepackage{epsfig}
\usepackage{fancybox}
\usepackage{graphicx}
\usepackage{latexsym}
\usepackage{niceframe}
\usepackage{lipsum} %exemplo de texto
\usepackage{tikz}
\usepackage{color}
\usepackage{dsfont}

\usepackage{epsfig}
\usepackage[english]{babel}
\usepackage[latin1]{inputenc}
\usepackage{indentfirst}
\usepackage{graphicx,subfigure}
\usepackage{tabularx}
\usepackage{xspace} % To get the right spacings in front of : and so on
\usepackage{amsmath,amssymb}
\usepackage{color}
\definecolor{oneblue}{rgb}{0,0.0,0.75}

%\newcommand{\sech}{\mathop{\operator@font sech}}
%\newcommand{\sign}{\mathop{\operator@font sign}}

%\makeatletter
%\newcommand{\sech}{\mathop{\operator@font sech}}
%\newcommand{\sign}{\mathop{\operator@font sign}}
%\makeatother

\DeclareMathOperator*{\esssup}{ess\,sup}

%%\usepackage[latin1]{inputenc}
%%\usepackage[brazil,portuges]{babel}

%% The amsthm package provides extended theorem environments
\usepackage{amsthm}

%% The lineno packages adds line numbers. Start line numbering with
%% \begin{linenumbers}, end it with \end{linenumbers}. Or switch it on
%% for the whole article with \linenumbers after \end{frontmatter}.

\begin{document}
\title[Mathematical properties and numerical approximation...]{Mathematical properties and numerical approximation of pseudo-parabolic systems}

\author{E. Abreu}
\address{\textbf{E.~Abreu:} Universidade Estadual de Campinas,
13.083-970, Campinas, SP, Brazil}
\email{eabreu@ime.unicamp.br}

\author{E. Cuesta}
\address{\textbf{E.~Cuesta:} Applied Mathematics Department, University of Valladolid, P/ Belen 15, 47011, Valladolid, Spain}
\email{eduardo.cuesta@uva.es}

\author{A. Dur\'an}
\address{\textbf{A.~Dur\'an:} Applied Mathematics Department, University of Valladolid, P/ Belen 15, 47011, Valladolid, Spain}
\email{angeldm@uva.es}

\author{W. Lambert}
\address{\textbf{W.~Lambert:} Federal University of Alfenas, 
UNIFAL, Po\c cos de Caldas, MG, Brazil}
\email{wanderson.lambert@unifal-mg.edu.br}

\subjclass[2000]{65M70; 65M12}

%\date{\today}

%\dedicatory{This paper is dedicated to our advisors.}

\keywords{Pseudo-parabolic equations; spectral methods; error estimates;
strong stability preserving methods; non-regular data.}

\begin{abstract}
The paper is concerned with the mathematical theory and numerical approximation of systems of partial differential equations (pde) of hyperbolic, pseudo-parabolic type. Some mathematical properties of the initial-boundary-value problem (ibvp) with Dirichlet boundary conditions are first studied. They include the weak formulation, well-posedness and existence of traveling wave solutions connecting two states, when the equations are considered as a variant of a conservation law. Then, the numerical approximation consists of a spectral approximation in space based on Legendre polynomials along with a temporal discretization with strong stability preserving (SSP) property. The convergence of the semidiscrete approximation is proved under suitable regularity conditions on the data. The choice of the temporal discretization is justified in order to guarantee the stability of the full discretization when dealing with nonsmooth initial conditions.
A computational study explores the performance of the fully discrete scheme with regular and nonregular data.
\end{abstract}

%\begin{keyword}
%pseudo-parabolic equations \sep spectral methods \sep error estimates \sep
%strong stability preserving methods \sep non-regular data.
%%\PACS 
%\MSC 65M70 \sep 65M12
%\end{keyword}

%\end{frontmatter}

%%
%% Start line numbering here if you want
%%
% \linenumbers % ADICONAR LINHAS pacote \usepackage{lineno} acima!!

\maketitle

%\newpage
\tableofcontents
\section{Introduction}
\label{sec1}
This paper is concerned with the theoretical and numerical analysis of pde systems of hyperbolic, pseudo-parabolic type. 
%These systems appear in the mathematical modelling of several phenomena and the aims of the present paper are to provide a rigurous mathematical analysis to justify this type of pde's in modelling and to propose an efficient numerical method for the approximation. Of particular relevance for us is the alternative provided by these systems to model
%multiphase flow problems in porous media,  traditionally studied with hyperbolic pde's. Here the pseudo-parabolic terms are associated to the additional assumption of nonequilibrium effects in the capillary pressure-saturation relationships, \cite{SHWG90,SHWG93a,SHWG93b,Baren2,Baren3,Juanes}, see also \cite{13DPP,CNP19,duijn2013travel,AbreuV2017,ABFL19,AFV20}. The study of the dynamics of nonequilibrium pseudo-parabolic two- and three-phase flow models is one of the main motivations of this work and will be further addressed in 
%detail in \cite{ACDL2024} by using the results of the present paper.
They are formulated as follows.
Let $d\geq 1$ be an integer, $T>0$. For $u=(u_{1},\ldots,u_{d})^{T}\in\mathbb{R}^{d}$, $x_{L}\leq x\leq x_{R}, 0\leq t\leq T$ the Dirichlet ivp considered here, in its general form, is
\begin{eqnarray}
&&\left(I-\frac{\partial}{\partial x}\left(A\frac{\partial}{\partial x}\right)\right)\frac{\partial}{\partial t}u
=-\frac{\partial}{\partial x}\left(B\frac{\partial}{\partial x}u\right)+\frac{\partial}{\partial x}G(u)+\gamma(u,x,t),\label{eq:3psystem1a}\\
&&u(x_{L},t)=g^{L}(t),\quad u(x_{R},t)=g^{R}(t),\quad0\leq t\leq T,\label{eq:3psystem1b}\\
&&u(x,0)=u_{0}(x),\quad x_{L}\leq x\leq x_{R}.\label{eq:3psystem1c}
\end{eqnarray}
The elements of (\ref{eq:3psystem1a})-(\ref{eq:3psystem1c}) are:
\begin{itemize}
\item[(H1)] $A=A(u)=\left(a_{ij}(u)\right)_{i,j=1}^{d}$ satisfies the conditions:
\begin{enumerate}
\item There exists $\alpha>0$ such that for all $u$
\begin{eqnarray}
\xi^{T}A(u)\xi\geq\alpha|\xi|^{2},\; \xi\in\mathbb{R}^{d}.\label{spd}
\end{eqnarray}
\item For $i,j=1,\ldots,d$, $a_{ij}=a_{ij}(u)$ is Lipschitz continuous with constant $L$.
%\item There is $M>0$ such that for all $u$, $x\in[x_{L},x_{R}], t>0$
%$$|a_{ij}(u(x,t))|<M,\; i,j=1,\ldots,d.$$
\end{enumerate}
\item[(H2)] $B=B(u)=\left(b_{ij}(u)\right)_{i,j=1}^{d}, G=G(u)\in\mathbb{R}^{d}, \gamma=\gamma(x,t,u)\in\mathbb{R}^{d}$ are $C^{1}$, bounded and Lipschitz continuous in their arguments.
\end{itemize}
On the other hand, $g^{L},g^{R}:[0,T]\rightarrow \mathbb{R}^{d}$ are $C^{1}([0,T])$, with $u_{0}:[x_{L},x_{R}]\rightarrow\mathbb{R}^{d}$ as the initial condition.

For the sake of clarity, several simplifications in (\ref{eq:3psystem1a})-(\ref{eq:3psystem1c}) will be assumed throughout the present paper:
\begin{itemize}
\item[(i)] We take $x_{L}=-1, x_{R}=1$,  being aware of the corresponding scaling  when dealing with a different interval (see the numerical experiments in section \ref{sec4}).
\item[(ii)] The boundary conditions are homogeneous ($g^{L}=g^{R}=0$). Otherwise, defining the auxiliary function $\overline{u}=(u^{(1)},\ldots,u^{(d)})^{T}$ with
$$u^{(j)}(x,t)=\frac{g_{j}^{R}(t)-g_{j}^{L}(t)}{2}x+\frac{g_{j}^{R}(t)+g_{j}^{L}(t)}{2},\; j=1,\ldots,d,$$ then (\ref{eq:3psystem1a})-(\ref{eq:3psystem1c}) can be trasnformed to a homogeneous Dirichlet problem of the same type for $v=u-\overline{u}$.
\item[(iii)] We will assume that $\gamma$ is a function of $u$. The results obtained in the paper can be extended to the case of additional dependences on the variables $x$ and $t$ in the expected way.
%\item[(iv)] From the beginning, we assume that (\ref{eq:3psystem1a})-(\ref{eq:3psystem1c}) is set for one spatial dimension. The extension of the results to the multi-dimensional case, cf. \cite{EAAD20}, will be considered in a further work, \cite{ACDL2024}.
\end{itemize}
From the seminal papers \cite{Sobolev1954,Ting1969}, 
hyperbolic, pseudo-parabolic systems of the form \eqref{eq:3psystem1a} cover a wide range of modelling of physical phenomena with pde's, mainly in fluid flow and heat conduction problems, \cite{Bear2018,barenblatt1960basic}.
The aims of the present paper are to provide a rigurous mathematical analysis to justify this type of pde's in modelling and to propose an efficient numerical method for the approximation. Of particular relevance for us is the alternative provided by these systems to model
multiphase flow problems in porous media,  traditionally studied with hyperbolic pde's. Here the pseudo-parabolic terms are associated to the additional assumption of nonequilibrium effects in the capillary pressure-saturation relationships, \cite{SHWG90,SHWG93a,SHWG93b,Baren2,Baren3,Juanes}, see also \cite{13DPP,CNP19,duijn2013travel,AbreuV2017,ABFL19,AFV20,KSMS11,EAPFWL23,Chen} and references therein. The study of the dynamics of nonequilibrium pseudo-parabolic two- and three-phase flow models is one of the main motivations of this work and will be further addressed in 
detail in \cite{ACDL2024} by using the results of the present paper.

The classical references for the mathematical analysis of the ivp for \eqref{eq:3psystem1a} and different ibvp's are \cite{ShowalterT1970,Showalter1978}. As for approximation methods, most of the literature on computational aspects and numerical analysis is concerned with the 1D scalar case, covering almost all the approximation tools for the spatial and temporal discretization (such as finite differences, finite elements, finite volumes, spectral methods, and discontinuous Galerkin methods) and with different techniques (hybrid, splitting/nonsplitting, etc). We refer \cite{EAAD20} for an exhaustive bibliography on the subject.
%The literature on systems like \eqref{eq:3psystem1a} seems to be not so extensive (...) 

%This lack of literature on the theoretical and numerical analysis is the main starting point for the present paper. 
The main contributions of the present paper are the following:
\begin{itemize}
\item[(i)] 
The problem \eqref{eq:3psystem1a}-\eqref{eq:3psystem1c} is shown to admit an equivalent weak formulation. This is used to prove, under suitable hypotheses on the coefficients, the well-posedness in the form of a result of existence and uniqueness of solution of the weak form, as well as a regularity theorem.
\item[(ii)] The paper \cite{EAAD20} proposes to discretize the ibvp of a pseudo-parabolic equation with Dirichlet boundary conditions with a spectral approximation in space based on Jacobi polynomials and SSP methods for the time numerical integration. The semidiscrete Galerkin and collocation approximations are shown to exist and error estimates with respect to the exact solution are derived in suitable norms. These results are extended here for the case of the ibvp \eqref{eq:3psystem1a}-\eqref{eq:3psystem1c} in several ways. The first one is concerned with the pseudo-parabolic term which, compared to \cite{EAAD20}, can be nonlinear (in the sense that the matrix $A$ may now depend on $u$). The second point is the obvious extension to systems. As well, a spectral semidiscretization based on Legendre polynomials is analyzed. The family of Legendre polynomials is a subfamily of Jacobi polynomials which is a natural choice to discretize in space problems with Dirichlet boundary conditions.
In particular, the accuracy of the spectral semidiscretization given by the error estimates is established in terms of the degree of polynomial approximation and the regularity of the data of the problem. Some implementation details are provided and the extension of the results to other families of Jacobi polynomials will be discussed. 
\item[(iii)] The error estimates are valid under certain regularity conditions of the data. When some of them does not hold, a loss of global accuracy is expected in the form, amog other possible phenomena, of a reduction of order. This fact is illustrated and discussed by means of a computational study. In the particular case of discontinuous initial conditions, the use of temporal discretizations with high order of dispersion and strong stability preserving properties has shown to improve the performance of the numerical approximation, in the sense of reducing the errors in the expected oscillatory parts of the solution and controlling the stability close to the discontinuity.

%The convergence result does not hold when the initial conditions are not smooth enough. In those cases, and due to the hyperbolic character, some loss of global accuracy may occuer. In order to minimize this fact several tools are mentioned here. One is the use of filtering either in the coefficients of the Jacobi expansion of the semidiscrete approximation or in the phisical space. This may lead to the recovery of a local spectral accuracy in points not too close to the discontinuity. A second one is concerned with the time integration of the semidiscrete system. As in the scalar case, the use of temporal discretization with special stability properties may improve the stability close to the discontinuity.
\end{itemize}
The structure of the paper is as follows. In section \ref{sec2}, the weak version of \eqref{eq:3psystem1a}-\eqref{eq:3psystem1c} is formulated and well-posedness results are proved. Section \ref{sec3} is devoted to the numerical approximation of \eqref{eq:3psystem1a}-\eqref{eq:3psystem1c}. The Legendre-Galerkin spectral discretization is first analyzed: existence of semidiscrete solution and error estimates are proved. Further implementation details are provided. Then the choice of high-order dispersive and SSP time integrators is justified. The resulting fully discrete method is checked in accuracy and stability in section \ref{sec4} with a computational study, which includes numerical experiments with smooth and nonsmooth data. Some concluding remarks are in section \ref{sec5}. Finally  \ref{appen1} consists of a discussion on the existence of traveling waves solutions of the Riemann problem where \eqref{eq:3psystem1a} is considered as a diffusive-dispersive variant of a conservation law

The following notation will be used throughout the paper. 
For positive integer $p$, $L^{p}=L^{p}(\Omega)$ denotes 
the normed space of $L^{p}$-functions on $\Omega=(-1,1)$, while for nonnegative 
integer $m$, $C^{m}(\overline{\Omega})$ is the space of 
$m$-th order continuously differentiable functions on 
$\overline{\Omega}$. 
The standard inner product in $L^{2}$ will be denoted by $(\varphi,\psi)_{0}, \varphi, \psi\in L^{2}$, with associated norm given by $|\cdot |_{0}$.
For the Sobolev spaces, $H^{k}=H^{k}(\Omega), k\geq 0$ integer (where $H^{0}=L^{2}$), the corresponding 
norm will be denoted by
\begin{eqnarray*}
|\varphi |_{k}^{2}=\sum_{j=0}^{k}\left|\frac{d^{j}}{dx^{j}}\varphi\right|_{0}^{2}.\label{12b}
\end{eqnarray*}
We will also consider the spaces 
$H_{0}^{k}=H_{0}^{k}(\Omega)$ of functions 
$\varphi\in H^{k}$ such that $\varphi(-1)=\varphi(1)=0$. 
For $s\geq 0$, $H^{s}=H^{s}(\Omega)$ (and 
$H_{0}^{s}=H_{0}^{s}(\Omega)$) are defined by 
interpolation theory, \cite{Adams}.

For $s\geq 0$, let $X^{s}$ be the product space of $d$ copies of $H^{s}$. The inner product in $X^{0}$ is denoted by
\begin{eqnarray*}
\langle \varphi,\psi\rangle=\sum_{j=1}^{d}(\varphi_{j},\psi_{j})_{0},\quad \varphi=(\varphi_{1},\ldots,\varphi_{d})^{T}, \psi=(\psi_{1},\ldots,\psi_{d})^{T}\in X^{0},
\end{eqnarray*}
with associated norm 
\begin{eqnarray*}
||\varphi||_{0}=\left(\sum_{j=1}^{d}|\varphi_{j}|_{0}^{2}\right)^{1/2}.
\end{eqnarray*}
This is used to define the norm in $X^{k}, k\geq 0$ integer, given by
\begin{eqnarray*}
||\varphi ||_{k}^{2}=\sum_{j=0}^{k}\left\|\frac{d^{j}}{dx^{j}}\varphi\right\|_{0}^{2},\quad \varphi=(\varphi_{1},\ldots,\varphi_{d})^{T}\in X^{k},
\end{eqnarray*}
where $\frac{d^{j}}{dx^{j}}\varphi=(\frac{d^{j}}{dx^{j}}\varphi_{1},\ldots,\frac{d^{j}}{dx^{j}}\varphi_{d})^{T}, 0\leq j\leq k$.
Similarly, we will consider the product spaces $X_{0}^{s}$ of $d$ copies of $H_{0}^{s}, s\geq 0$.

On the other hand, the dual space of $H^{k}(\Omega)$ will be denoted by $\left(H^{k}(\Omega)\right)^{\prime}$; this is defined from the completion of $L^{2}(\Omega)$ with respect to the norm, \cite{Adams},
\begin{eqnarray*}
||v||_{-k,2}=\sup_{u\in H^{k}(\Omega),||u||_{k}=1}|\langle u,v\rangle |,\quad
\langle u,v\rangle=\int_{\Omega}uvd\Omega.
\end{eqnarray*}
Additionally, $(X^{k})'$ will stand for $(H^{k}(\Omega))' \times\ldots^{d)}\times (H^{k}(\Omega))'$.

For $T>0$, $\Omega_{T}=\Omega\times (0,T]$ will denote the set of points $({ x},t), { x}\in\Omega, 0<t\leq T$ and $\overline{\omega_{T}}:=\overline{\Omega}\times [0,T]$. The space of infinitely continuously differentiable {real-valued} functions in $\overline{\Omega}\times (0,T]$ will be denoted by $C^{\infty}\left(\overline{\Omega}\times (0,T]\right)$ as well as the space of $m-$th order continuously differentiable functions ${\bf u}:(0,T]\rightarrow X^{k}$ by $C^{m}(0,T,X^{k})$, $m,k$ nonnegative integers. Additionally, $L^{2}(0,T,X^{k})$  will stand for the normed space of functions $u:(0,T]\rightarrow X^{k}(\Omega)$ with associated norm
\begin{eqnarray*}
||u||_{L^{2}(0,T,X^{k})}=\left(\int_{0}^{T}||u(t)||_{k}^{2}dt\right)^{1/2}.
\end{eqnarray*}
We also denote by $L^{\infty}(0,T,X^{k})$ the normed space of functions $u:(0,T]\rightarrow X^{k}(\Omega)$ with norm
\begin{eqnarray*}
||u||_{L^{\infty}(0,T,X^{k})}=
{\esssup}_{t\in (0,T)}||u(t)||_{k},
\end{eqnarray*}
with $\esssup$ as the essential supremum.

Throughout the paper $C$ will be used to denote a generic, 
positive constant.

\section{Mathematical properties of the ibvp \eqref{eq:3psystem1a}-\eqref{eq:3psystem1c}}
\label{sec2}
This section is concerned with the weak formulation of \eqref{eq:3psystem1a}-\eqref{eq:3psystem1c} and its well-posedness, under (H1)-(H3) and the simplifications (i)-(iii).

\subsection{Weak formulation}
\label{sec22}
Taking the $L^{2}$-inner product of  \eqref{eq:3psystem1a} with $v\in X_{0}^{1}$, integration by parts and the assumption (ii) lead to the
weak form of (\ref{eq:3psystem1a})-(\ref{eq:3psystem1c}): find $u:[0,T]\rightarrow X_{0}^{1}$ such that
\begin{eqnarray}
\mathcal{A}_{u}(u_{t},v)&=&\mathcal{B}_{u}(u,v),\quad v\in X_{0}^{1},\label{eq:21}\\
u(0)&=&u_{0},\nonumber
\end{eqnarray}
where 
\begin{eqnarray}
\mathcal{A}_{u}(\varphi,\psi)&=&\langle \varphi,\psi\rangle+L_{A(u)}(\varphi,\psi),\nonumber\\
\mathcal{B}_{u}(\varphi,\psi)&=&L_{B(u)}(\varphi,\psi)+\langle \frac{\partial}{\partial x}G(\varphi),\psi\rangle+\langle {\gamma}(\varphi),\psi\rangle,\quad\varphi,\psi\in X_{0}^{1},\label{eq:22}
\end{eqnarray}
and where if $C=(c_{ij}(u))_{i,j=1}^{d}\in\mathbb{R}^{d\times d}$
\begin{eqnarray}
L_{C(u)}(\varphi,\psi)=\int_{\Omega}(C(u)\varphi_{x})\cdot \psi_{x}dx.\label{eq:23}
\end{eqnarray}
The dot in (\ref{eq:23}) stands for the Euclidean inner product in $\mathbb{R}^{d}$.

\begin{remark}
\label{remark1}
Some properties of the formulation (\ref{eq:21}) will be used in the sequel. We recall that for the scalar case, the bilinear form
\begin{eqnarray*}
l(\varphi,\psi)=\int_{\Omega}\varphi_{x}\psi_{x}dx,\; \varphi,\psi\in H^{1},\label{bil1}
\end{eqnarray*}
(which is a seminorm in $H^{1}$) is continuous in $H^{1}\times H_{0}^{1}$ and elliptic in $H_{0}^{1}\times H_{0}^{1}$, \cite{BernardiM1997,CanutoHQZ1988}. A direct extension of this result is that the bilinear form
\begin{eqnarray}
L(\varphi,\psi)=\int_{\Omega}\varphi_{x}\cdot\psi_{x}dx,\quad \varphi,\psi\in X^{1},\label{bil2}
\end{eqnarray}
is continuous in $X^{1}\times X_{0}^{1}$ and elliptic in $X_{0}^{1}\times X_{0}^{1}$. Then writing
$$\langle \frac{\partial}{\partial x}G(\varphi),\psi\rangle=-\int_{\Omega}G(\varphi)\cdot \psi_{x}dx,\; \varphi\in X^{1},\; \psi\in X_{0}^{1},$$ and the hypothesis (H2) imply that the right-hand side of (\ref{eq:21}) is continuous in $X^{1}\times X_{0}^{1}$.

On the other hand, let $v\in H_{0}^{1}$ be fixed and consider the functional
\begin{eqnarray}
L_{C(v)}(\varphi,\psi)=\int_{\Omega}C(v)\varphi_{x}\cdot\psi_{x}dx,\quad \varphi,\psi\in X^{1},\label{bil3}
\end{eqnarray}
where $C=C(v)=(c_{ij}(v))_{i,j=1}^{d}$ denotes a $d\times d$ matrix. A comparison with (\ref{bil2}) directly implies that:
\begin{itemize}
\item If $c_{ij}(v), 1\leq i,j\leq d,$ is bounded, then (\ref{bil3}) is continuous in $X^{1}\times X_{0}^{1}$: there is a constant $C_{1}>0$ such that 
\begin{eqnarray*}
|L_{C(v)}(\varphi,\psi)|\leq C_{1}||\varphi||_{1}||\psi||_{1},\quad \varphi,\psi\in X^{1}. \label{bil31}
\end{eqnarray*}
\item If $C$ is uniformly positive definite, cf. (H1), then 
 there is a constant $C_{2}>0$ such that 
\begin{eqnarray}
|L_{C(v)}(\varphi,\varphi)|\geq C_{2}||\varphi||_{1}^{2},\quad \varphi,\in X_{0}^{1}, \label{bil32}
\end{eqnarray}
where in (\ref{bil32})  the equivalence in $H_{0}^{1}$ between the norm $||\cdot||_{1}$ and the seminorm (\ref{bil1}) is used.
\end{itemize}
\end{remark}

%By using the results for scalar equations (\cite{EAAD20} and references therein), it is not hard to see that $\mathcal{A}$ is continuous on $(H_{w}^{1})^{2}\times (H_{w,0}^{1})^{2}$ and elliptic on $(H_{w,0}^{1})^{2}\times (H_{w,0}^{1})^{2}$, that is
%\begin{eqnarray*}
%|\mathcal{A}(u,v)|&\leq& C_{1}||u||_{1,w}||v||_{1,w},\quad u\in (H_{w}^{1})^{2}, v\in (H_{w,0}^{1})^{2}\\
%\mathcal{A}(u,u)&\geq &C_{2}||u||_{1,w}^{2},\quad u\in (H_{w,0}^{1})^{2}.
%\end{eqnarray*}
%Similarly, if we write
%\begin{equation*}
%\langle G'(u)\frac{\partial}{\partial x}u,v\rangle_{w}=-\int_{\Omega}G(u)\cdot (vw)_{x}dx,\quad u\in (H_{w}^{1})^{2}, v\in (H_{w,0}^{1})^{2},
%\end{equation*}
%If we assume that $B, G, \gamma$ are $C^{1}$ functions of $x,t, u$ with bounded first derivatives, then the arguments in Remark 2.2 of \cite{EAAD20} can be used to prove that $\mathcal{B}$ is continuous  on $(H_{w}^{1})^{2}\times (H_{w,0}^{1})^{2}$.
\subsection{Well-posedness}
\label{sec23}
Some well-posedness results are now analyzed. We first define
$$W(0,T)=\{w\in L^{2}(0,T,X^{1}), \frac{dw}{dt}\in L^{2}(0,T,(X^{1})^{\prime})\},$$ provided with the graph norm, \cite{Brezis}. In order to study the existence of solutions of (\ref{eq:21}), we first consider, for some given $U\in W(0,T)\cap L^{\infty}(0,T,X_{1})$, the following ibvp in $\overline{\Omega}_{T}$
\begin{eqnarray*}
&&\left(I-\frac{\partial}{\partial x}\left(A(U)\frac{\partial}{\partial x}\right)\right)\frac{\partial}{\partial t}u
=-\frac{\partial}{\partial x}\left(B(u)\frac{\partial}{\partial x}u\right)+\frac{\partial}{\partial x}G(u)+\gamma(u),\label{eq:l3psystem1a}\\
&&u(\pm 1,t)=0,\quad0\leq t\leq T,\label{eq:l3psystem1b}\\
&&u(x,0)=u_{0}(x),\quad -1\leq x\leq 1,\label{eq:l3psystem1c}
\end{eqnarray*}
which can be written in a weak form as: find $u:[0,T]\rightarrow X_{0}^{1}$ such that, for all $\psi\in X_{0}^{1}$
\begin{eqnarray}
\mathcal{A}_{U}(u_{t},v)&=&\mathcal{B}_{u}(u,v),\quad v\in X_{0}^{1},\label{eq:21lin}\\
u(0)&=&u_{0},\nonumber
\end{eqnarray}
where $\mathcal{A}, \mathcal{B}$ are defined in (\ref{eq:22}). The following lemma ensures the well-posedness of (\ref{eq:21lin}).
\begin{lemma}
\label{lemma1}
Let $T>0$, $U\in W(0,T)\cap L^{\infty}(0,T,X_{1})$, and assume that (H1), (H2) hold. If $u_{0}\in X_{0}^{1}$, then there is a unique solution $u\in C(0,T,X_{0}^{1})$ of (\ref{eq:21lin}). Furthermore, there is a constant $C$ depending on $||u_{0}||_{1,w}, A, B, G, \gamma$ such that
\begin{eqnarray}
||u||_{L^{\infty}(0,T,X^{1})}+||u_{t}||_{L^{\infty}(0,T,X^{1})}\leq C.\label{5b}
\end{eqnarray}
\end{lemma}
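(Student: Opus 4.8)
\textit{Proof idea.} The plan is to freeze the principal (pseudo-parabolic) operator, invert it by Lax--Milgram so that \eqref{eq:21lin} becomes an abstract ordinary differential equation in $X_{0}^{1}$ with a measurable-in-time, locally Lipschitz and linearly growing right-hand side, solve it by the classical Picard iteration, and finally obtain the a priori bound \eqref{5b} by testing with $u_{t}$; the linear growth will also be what rules out blow-up before $T$.

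\textit{Step 1: inversion of the frozen pseudo-parabolic operator.} For almost every fixed $t\in[0,T]$ one studies the bilinear form $\mathcal{A}_{U(t)}(\cdot,\cdot)$ on $X_{0}^{1}\times X_{0}^{1}$. Since in one space dimension $X^{1}\hookrightarrow (C(\overline{\Omega}))^{d}$ and the $a_{ij}$ are Lipschitz by (H1), the entries of $A(U(t))$ are bounded in $L^{\infty}(\Omega)$ by a constant depending only on $L$, $A(0)$ and $\|U\|_{L^{\infty}(0,T,X^{1})}$; hence, arguing as in Remark~\ref{remark1}, $\mathcal{A}_{U(t)}$ is continuous on $X^{1}\times X_{0}^{1}$ uniformly in $t$, and by (H1)
\begin{equation*}
\mathcal{A}_{U(t)}(\varphi,\varphi)=\|\varphi\|_{0}^{2}+\int_{\Omega}\bigl(A(U(t))\varphi_{x}\bigr)\cdot\varphi_{x}\,dx\geq\|\varphi\|_{0}^{2}+\alpha\|\varphi_{x}\|_{0}^{2}\geq\min\{1,\alpha\}\,\|\varphi\|_{1}^{2},
\end{equation*}
again uniformly in $t$. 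Lax--Milgram then yields, for each such $t$, a bounded linear operator $\mathcal{S}(t)$ from the dual of $X_{0}^{1}$ into $X_{0}^{1}$ with $\mathcal{A}_{U(t)}(\mathcal{S}(t)f,v)=\langle f,v\rangle$ for all $v\in X_{0}^{1}$ and $\sup_{t}\|\mathcal{S}(t)\|<\infty$. Because $t\mapsto U(t)\in X^{1}$ is strongly measurable, $w\mapsto A(w)$ is continuous from $(C(\overline{\Omega}))^{d}$ to itself, and $\mathcal{S}(t)$ depends continuously in operator norm on the (uniformly bounded, uniformly coercive) form $\mathcal{A}_{U(t)}$, the map $t\mapsto\mathcal{S}(t)$ is strongly measurable. \emph{This step is the only delicate one}: it is where the hypothesis $U\in W(0,T)\cap L^{\infty}(0,T,X^{1})$ is used, to get the uniform-in-$t$ control of $A(U(t))$ and the measurability of $\mathcal{S}(\cdot)$.

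\textit{Step 2: the abstract ODE.} Define $F:X_{0}^{1}\to (X_{0}^{1})'$ by $\langle F(w),v\rangle=\mathcal{B}_{w}(w,v)$, cf.\ \eqref{eq:22}. Using the boundedness and Lipschitz continuity in (H2) and $X^{1}\hookrightarrow (C(\overline\Omega))^{d}$, one checks that $F$ is Lipschitz on bounded subsets of $X_{0}^{1}$ and satisfies the linear growth estimate $\|F(w)\|_{(X_{0}^{1})'}\leq C(1+\|w\|_{1})$. Problem \eqref{eq:21lin} is then equivalent to $\tfrac{du}{dt}=\mathcal{S}(t)F(u)$ in $X_{0}^{1}$ with $u(0)=u_{0}\in X_{0}^{1}$, whose right-hand side is measurable in $t$, locally Lipschitz in $u$ (uniformly in $t$, since $\sup_{t}\|\mathcal{S}(t)\|<\infty$) and of linear growth. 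The standard Carath\'eodory/Picard theory in Banach spaces then produces a unique absolutely continuous solution $u:[0,T]\to X_{0}^{1}$ (the linear growth precluding blow-up before $T$); in particular $u\in C(0,T,X_{0}^{1})$ and $u_{t}\in L^{\infty}(0,T,X^{1})$, and uniqueness for \eqref{eq:21lin} follows from the uniform Lipschitz bound and Gronwall's lemma.

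\textit{Step 3: the a priori bound.} Taking $v=u_{t}(t)\in X_{0}^{1}$ in \eqref{eq:21lin}, the coercivity of $\mathcal{A}_{U(t)}$ from Step 1 and the boundedness assumptions in (H2) give $\min\{1,\alpha\}\|u_{t}(t)\|_{1}^{2}\leq|\mathcal{B}_{u}(u,u_{t})|\leq C(1+\|u(t)\|_{1})\|u_{t}(t)\|_{1}$, hence $\|u_{t}(t)\|_{1}\leq C(1+\|u(t)\|_{1})$; inserting this into $u(t)=u_{0}+\int_{0}^{t}u_{t}(s)\,ds$ and applying Gronwall's inequality yields $\|u(t)\|_{1}\leq(\|u_{0}\|_{1}+CT)e^{CT}$ on $[0,T]$, and then $\|u_{t}(t)\|_{1}\leq C$, which is \eqref{5b}. (An equivalent route would be a Faedo--Galerkin discretization in finite-dimensional subspaces of $X_{0}^{1}$: there the ODE systems have measurable-in-time, Lipschitz-in-state coefficients, the estimates above hold uniformly for the approximations, and weak-$*$ compactness together with the bound on $u_{t}$ gives a limit in $C(0,T,X_{0}^{1})$.) $\square$
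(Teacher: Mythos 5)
Your proof is correct, but it takes a genuinely different route from the paper. The paper proves Lemma \ref{lemma1} by a Faedo--Galerkin scheme: it projects \eqref{eq:21lin} onto finite-dimensional subspaces $V_{n}={\rm span}(W_{1},\ldots,W_{n})$ of $X_{0}^{1}$, obtains local-in-time solvability of the resulting ODE systems, derives the uniform bound $\|\partial_{t}u_{n}\|_{1}\leq C\|u_{n}\|_{1}$ by testing with $\partial_{t}u_{n}$, extends to $[0,T]$ by Gronwall, and then passes to the limit using weak compactness in $X^{1}$ together with the compact embedding $H^{1}\hookrightarrow L^{2}$; uniqueness and \eqref{5b} are obtained afterwards by a separate energy argument. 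You instead exploit the defining feature of pseudo-parabolic problems: the frozen operator $\mathcal{A}_{U(t)}$ is uniformly bounded and coercive on $X_{0}^{1}$ (your verification via (H1), the embedding $X^{1}\hookrightarrow (C(\overline{\Omega}))^{d}$ and $U\in L^{\infty}(0,T,X^{1})$ is sound; in fact, since $W(0,T)\hookrightarrow C(0,T,X^{0})$ and bounded $H^{1}$-sets are compact in $C(\overline{\Omega})$ in one dimension, $t\mapsto\mathcal{S}(t)$ is even norm-continuous, so the measurability you flag as delicate comes for free). Inverting it by Lax--Milgram converts \eqref{eq:21lin} into an abstract ODE $u'=\mathcal{S}(t)F(u)$ in $X_{0}^{1}$ with a locally Lipschitz, linearly growing right-hand side, and Picard--Carath\'eodory theory delivers existence and uniqueness simultaneously, with no subsequence extraction; your Step 3 then recovers \eqref{5b} by essentially the same $v=u_{t}$ energy estimate the paper uses. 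This is the classical Showalter--Ting viewpoint on Sobolev equations and is arguably cleaner for this lemma. What the paper's Galerkin route buys in exchange is structural reuse: the identical finite-dimensional approximation and compactness machinery reappears in the fixed-point argument of Theorem \ref{theorem21} and mirrors the Legendre--Galerkin semidiscretization analyzed in Theorem \ref{theorem32}, so the lemma's proof doubles as a template for the rest of the paper.
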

\begin{proof}
The proof will follow a variant of the method of Faedo-Galerkin, \cite{Lions1969}. Let $\{W_{k}\}_{k}$ be an orthonormal basis of $X_{0}^{1}$ and $V_{n}={\rm span}(W_{1},\ldots,W_{n})$. We consider the approximate problem of determining $u_{n}\in H^{1}(0,T,V_{n})$ satisfying
\begin{eqnarray}
\mathcal{A}_{U}(\partial_{t}u_{n},W_{s})&=&\mathcal{B}_{u_{n}}(u_{n},W_{s}),\quad s=1,\ldots,n,\nonumber\\
u_{n}(0)&=&u_{0}^{(n)},\label{eq:212}
\end{eqnarray}
where 
\begin{eqnarray*}
u_{0}=\sum_{k=1}^{\infty} u_{0,k}W_{k},\quad u_{0,k}=\langle u_{0},W_{k}\rangle,\quad k\geq 1,\quad
u_{0}^{(n)}= \sum_{k=1}^{n} u_{0,k}W_{k}.
\end{eqnarray*}
Writing
$$u_{n}=\sum_{k=1}^{n}C_{k}(t)W_{k},$$ then (\ref{eq:212}) leads to an equivalent ode system
\begin{eqnarray*}
C_{s}'(t)+\sum_{k=1}^{n}L_{A(U)}(W_{k},W_{s})C_{k}'(t)&=&\sum_{k=1}^{n}L_{B(u_{n})}(W_{k},W_{s})C_{k}(t)\nonumber\\
&&+\sum_{k=1}^{n}\langle G'(u_{n})\partial_{x}W_{k},W_{s}\rangle C_{k}(t)+\langle\gamma(u_{n}),W_{s}\rangle,\label{eq:213}
\end{eqnarray*}
with $C_{s}(0)=u_{0}^{(s)}, s=1,\ldots,n$. Using (H1), (H2), the standard ode systems theory proves the local existence of $u_{n}\in C^{1}(0,t^{n},V_{n})$ for some $t^{n}\in (0,T]$.

If we take $W_{s}=\partial_{t}u_{n}$ in (\ref{eq:212}), using (H1), (H2), and Remark \ref{remark1}, we have
\begin{eqnarray}
||\partial_{t}u_{n}(s)||_{1}\leq C ||u_{n}(s)||_{1},\quad s\in (0,t^{n}),\label{eq:213b}
\end{eqnarray}
for some constant $C$ independent of $n$. Then, for $t\in (0,t^{n})$
\begin{eqnarray}
||u_{n}(t)||_{1}=\left\|u_{n}(0)+\int_{0}^{t}\partial_{t}u_{n}(s)ds\right\|_{1}\leq  ||u_{n}(0)||_{1,w}+C\int_{0}^{t}||u_{n}(s)||_{1}ds.\label{eq:214}
\end{eqnarray}
Using the orthogonality of the $W_{k}$, there is a constant $C$ such that
\begin{eqnarray}
||u_{n}(0)||_{1,w}\leq C||u_{0}||_{1,w}.\label{eq:215}
\end{eqnarray}
Therefore, (\ref{eq:214}), (\ref{eq:215}), and Gronwall's lemma lead to
\begin{eqnarray}
||u_{n}(t)||_{1,w}\leq C,\quad t\in (0,t^{n}),\label{eq:216}
\end{eqnarray}
with $C$ independent of $n$. It follows that we can take $t^{n}=T$ for all $n$. Using again (\ref{eq:213b}) and (\ref{eq:216}) for $t\in [0,T]$ we have
\begin{eqnarray*}
||\partial_{t}u_{n}||_{L^{\infty}(0,T,X^{1})}\leq C.
\end{eqnarray*}
Therefore, for $t\in [0,T]$, the sequences
$$\{u_{n}(t)\}_{n}, \quad \{\partial_{t}u_{n}(t)\}_{n},$$ are uniformly bounded in $H^{1}$. This implies the existence of subsequences (denoted in the same way for simplicity) and $\varphi(t),\psi(t)\in H^{1}$ such that
$$u_{n}(t)\rightarrow \varphi(t),\quad \partial_{t}u_{n}(t)\rightarrow \psi(t),$$ weakly in $X^{1}$ and for $0\leq t\leq T$. On the other hand, due to the compact embedding of $H^{1}$ into $L^{2}$, there is $u\in L^{2}(0,T,X^{0})$ such that
$$||u_{n}-u||_{L^{2}(0,T,X^{0})}\rightarrow 0,\; n\rightarrow\infty,$$
for some subsequence $u_{n}$. The subsequence can also be chosen to converge almost everywhere on $\Omega_{T}$, \cite{Brezis}. It transpires that $u=\varphi\in L^{\infty}(0,T,X^{1})$ and $\psi=u_{t}$. From (H1), (H2), and Remark \ref{remark1}, taking the limit in (\ref{eq:212}) implies that $u$ satisfies (\ref{eq:21lin}) for any $W_{s}$ and therefore for any $v\in X_{0}^{1}$. The estimate (\ref{5b}) comes from evaluating (\ref{eq:21lin}) at $v=u_{t}$ and using the arguments above that lead to (\ref{eq:213b}) and (\ref{eq:216}), for $u$ instead of $u_{n}$ and all $t\in (0,T)$.

As for uniqueness, let $u_{1}, u_{2}$ be solutions of  (\ref{eq:21lin}) with the same initial condition and $u=u_{1}-u_{2}$. Then, for $\psi\in X_{0}^{1}$
\begin{eqnarray}
\langle u_{t},\psi\rangle+L_{A(U)}(u_{u},\psi)&=&\underbrace{\int_{\Omega}\left(B(u_{1})u_{1x}-B(u_{2})u_{2x}\right))\cdot \psi dx}_{I_{1}}\nonumber\\
&&+\underbrace{\langle \partial_{x}(G(u_{1})-G(u_{2})),\psi\rangle}_{I_{2}}\nonumber\\
&&+\underbrace{\langle \gamma(u_{1})-\gamma(u_{2}),\psi\rangle}_{I_{3}}.\label{*6}
\end{eqnarray}
If we write
$$B(u_{1})u_{1x}-B(u_{2})u_{2x}=B(u_{1})(u_{1x}-u_{2x})+(B(u_{1})-B(u_{2}))u_{2x},$$ and use (H1), (H2), and Remark \ref{remark1}, then we can obtain the existence of a constant $C>0$ such that
\begin{eqnarray}
|I_{1}|+|I_{2}|+|I_{3}|\leq C||u_{1}-u_{2}||_{1}||\psi||_{1}.\label{**6}
\end{eqnarray}
Therefore, evaluating (\ref{*6}) at $\psi=u_{t}$, using Remark \ref{remark1} and (\ref{**6}), it holds that for all $t\in [0,T]$
$$||u_{t}(t)||_{1}^{2}\leq C ||u(t)||_{1}||u_{t}(t)||_{1}.$$ Thus
$$||u(t)||_{1}\leq ||u(0)||_{1}+C\int_{0}^{t}||u(s)||_{1}ds,$$ and Gronwall's lemma implies that $u(t)=0, t\in [0,T]$.
\end{proof}
Lemma \ref{lemma1} is used to prove the main theorem of well-posedness.
\begin{theorem}
\label{theorem21} Let $T>0$ and assume that, under simplifications (i)-(iii), the problem
(\ref{eq:3psystem1a})-(\ref{eq:3psystem1c}) satisfies (H1), (H2).
If $u_{0}\in X_{0}^{1}$, then there is a unique 
solution $u\in C(0,T,X_{0}^{1})$ of (\ref{eq:21}) that depends continuously on the initial data. Furthermore, there is a constant $C$ depending on $||u_{0}||_{1}, A, B, G, \gamma$ such that
\begin{eqnarray}
||u||_{L^{\infty}(0,T,X^{1})}+||u_{t}||_{L^{\infty}(0,T,X^{1})}\leq C.\label{bound1}
\end{eqnarray}
In addition, if $k>1$ is an integer, $m=\max\{1,k-1\}$, assume that $a_{ij}\in C^{k}$, $B_{ij}, G_{i},\gamma_{i}\in C^{m}, i,j=1,\ldots d$, and $u_{0}\in X_{0}^{k}$. Let $u:[0,T]\rightarrow  X_{0}^{1}$ be satisfying (\ref{eq:21}). Then $u(t)\in X_{0}^{k}$ for all $t\in [0,T]$ and there is a constant  $C$ depending on $||u_{0}||_{k}, A, B, G, \gamma$ such that
\begin{eqnarray}
||u||_{L^{\infty}(0,T,X^{k})}+||u_{t}||_{L^{\infty}(0,T,X^{k})}\leq C.\label{5bb}
\end{eqnarray}
\end{theorem}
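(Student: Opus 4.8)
The plan is to treat the three assertions separately: local existence, uniqueness and continuous dependence for \eqref{eq:21}, the global bound \eqref{bound1}, and the propagation of regularity \eqref{5bb}. The first two will be obtained by a Banach fixed-point argument anchored on Lemma \ref{lemma1}, the third by induction on $k$ with higher-order energy estimates.

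For existence I would define, on $\mathcal{X}_{T}:=W(0,T)\cap L^{\infty}(0,T,X_{0}^{1})$, the map $\mathcal{T}$ sending $U$ to the solution $u=\mathcal{T}(U)$ of the linearized problem \eqref{eq:21lin} furnished by Lemma \ref{lemma1}. By \eqref{5b}, $\mathcal{T}$ maps a ball $B_{R}\subset\mathcal{X}_{T}$, with $R=R(\|u_{0}\|_{1},A,B,G,\gamma)$, into itself, and this bound is uniform on $[0,T]$. The decisive step is the contraction estimate on a short interval $[0,\tau]$: writing $u_{i}=\mathcal{T}(U_{i})$ and $w=u_{1}-u_{2}$, one has for $v\in X_{0}^{1}$
\begin{eqnarray*}
\langle w_{t},v\rangle+L_{A(U_{1})}(w_{t},v)=-L_{A(U_{1})-A(U_{2})}(\partial_{t}u_{2},v)+\bigl(\mathcal{B}_{u_{1}}(u_{1},v)-\mathcal{B}_{u_{2}}(u_{2},v)\bigr),
\end{eqnarray*}
and, evaluating at $v=w_{t}$ and using the Lipschitz continuity of the $a_{ij}$ in (H1), hypothesis (H2), Remark \ref{remark1} and the control of $\partial_{t}u_{2}$ from \eqref{5b}, one gets $\|w_{t}(t)\|_{1}\leq C\bigl(\|U_{1}-U_{2}\|_{L^{\infty}(0,\tau,X^{1})}+\|w(t)\|_{1}\bigr)$; since $w(0)=0$, Gronwall's lemma turns this into a contraction for $\tau$ sufficiently small, with $\tau$ and $C$ depending only on $R$ and the structural constants. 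This yields a unique fixed point on $[0,\tau]$, and since the a priori bound does not deteriorate upon restarting from $u(\tau)$, finitely many steps of fixed length $\tau$ cover $[0,T]$. Uniqueness on $[0,T]$ and continuous dependence on $u_{0}$ follow from the same Gronwall estimate applied to the difference of two solutions, exactly as in the uniqueness part of Lemma \ref{lemma1}, and \eqref{bound1} is \eqref{5b} with $U=u$.

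For the regularity statement I would induct on $k\geq 1$, the case $k=1$ being what has just been proved. Assuming $u,u_{t}\in L^{\infty}(0,T,X^{k-1})$, I differentiate the equation $k-1$ times in $x$; the resulting weak identity for $p:=\partial_{x}^{k-1}u$ has the same principal structure as \eqref{eq:21}, namely $\mathcal{A}_{u}(p_{t},v)$ on the left-hand side, plus a remainder $\mathcal{R}(u,p;v)$ collecting the commutators produced by moving $\partial_{x}^{k-1}$ through the coefficients $A(u),B(u),G(u),\gamma(u)$. These commutators involve derivatives of $a_{ij}$ up to order $k$ and of $B,G,\gamma$ up to order $m=\max\{1,k-1\}$ — which is exactly why those regularity hypotheses are imposed — multiplied by derivatives of $u$ and $u_{t}$ of order at most $k-1$, hence bounded through the induction hypothesis and \eqref{5b}. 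Testing with $v=p_{t}$, using the ellipticity \eqref{bil32} of Remark \ref{remark1} together with the standard product and chain-rule estimates in $H^{s}$, one arrives at $\|u_{t}(t)\|_{k}\leq C(1+\|u(t)\|_{k})$; inserting this into $u(t)=u_{0}+\int_{0}^{t}u_{t}(s)\,ds$ and applying Gronwall's lemma yields $\|u(t)\|_{k}\leq C$ and then $\|u_{t}(t)\|_{k}\leq C$, i.e. \eqref{5bb}. Since $u(t)(\pm 1)=0$, in fact $u(t)\in X_{0}^{k}$.

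The regularity step is where the real work lies. The differentiation above is only formal, since the solution is a priori controlled only in $X_{0}^{1}$; it must be made rigorous by running the higher-order energy estimates on the smooth Galerkin approximants of Lemma \ref{lemma1} and passing to the limit (or through difference quotients). More seriously, for $k\geq 2$ the function $p=\partial_{x}^{k-1}u$ does not lie in $X_{0}^{1}$, so the integrations by parts in the energy estimate generate boundary contributions; these involve only traces of derivatives of $u$ and $u_{t}$ of order $\leq k-1$, which are dominated by the induction hypothesis through the trace theorem and can therefore be absorbed, but organizing this bookkeeping — and verifying that every top-order term appearing in $\mathcal{R}$ is genuinely multiplied by a lower-order, already-bounded factor, so that the estimate closes \emph{linearly} and Gronwall applies — is the technical crux of the proof.
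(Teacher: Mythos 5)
Your argument is correct, but the existence step runs on a different engine from the paper's. The paper also anchors everything on Lemma \ref{lemma1} and the a priori bound \eqref{5b}, yet it closes the existence argument with the \emph{Schauder} fixed-point theorem: it introduces the convex set $K$ of functions in $W(0,T)$ satisfying \eqref{5b}, shows that the solution map of the linearized problem sends $K$ into itself, that $K$ is weakly compact (via the uniform bounds, compact embedding of $H^{1}$ into $L^{2}$, and extraction of a.e.\ convergent subsequences), and that the map is weakly continuous, so a fixed point exists on all of $[0,T]$ in one stroke; uniqueness and continuous dependence are then obtained separately by the energy/Gronwall estimate, exactly as you do. Your Banach contraction on a short interval is a genuinely different, more elementary and more constructive route: it exploits the Lipschitz continuity of the $a_{ij}$ in (H1) and of $B,G,\gamma$ in (H2) to obtain the quantitative estimate $\|w_{t}\|_{1}\leq C(\|U_{1}-U_{2}\|_{1}+\|w\|_{1})$, which Schauder never needs, and it delivers local uniqueness for free while avoiding the weak-compactness bookkeeping; the price is the continuation argument, whose validity rests (as you use implicitly) on the fact that the global bound $\|u(t)\|_{1}\leq\|u_{0}\|_{1}e^{Ct}$, with a structural constant $C$, keeps the contraction step size $\tau$ uniform over $[0,T]$ --- that point deserves to be stated explicitly rather than summarized as ``the a priori bound does not deteriorate.'' For the final regularity claim the paper offers only the phrase ``classical bootstrapping argument,'' so your induction on $k$ with commutator estimates, carried out on the Galerkin approximants (or via difference quotients) and with the boundary terms controlled by the inductive hypothesis and the trace theorem, is in fact more detailed than the source; the issues you flag as the technical crux are indeed the right ones.
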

%%\begin{remark}
%%The initial condition may be substituted by the more general
%%\begin{equation}
%%\mathcal{A}(R(0),\psi)=\mathcal{A}(R_{0},\psi),\; \psi\in (H_{w,0}^{1})^{2},\label{ad31be}
%%\end{equation}
%%cf. \cite{EAAD20}.
%%\end{remark}
\begin{proof}
The existence of solution is derived from the estimate (\ref{5b}) in Lemma \ref{lemma1} and the application of the Schrauder fixed-point theorem, \cite{Brezis}. We consider the nonempty, convex subset of $W(0,T)$
\begin{eqnarray*}
K=\{{w}\in W(0,T): {w} \,\, \mbox{satisfies}\,\, \mbox{(\ref{5b})}\,\,\mbox{with}\,\, {w}(0)={u}_{0}\},
\end{eqnarray*}
and the mapping $T:K\longrightarrow W(0,T)$ such that $T({w}):={u}({w})$ is the  solution of (\ref{eq:21lin}) with $U={w}$.

Note first that, by construction $T(K)\subset K$. On the other hand, let $\{w_{n}\}$ be a sequence in $K$ and $t\in [0,T]$. Since $K$ is bounded, then
$$\{w_{n}(t)\}_{n}, \quad \{\partial_{t}w_{n}(t)\}_{n},$$ are uniformly bounded in $H^{1}$. The same arguments used in the proof of Lemma \ref{lemma1} can be applied here to obtain the existence of a subsequence (denoted again by $\{w_{n}\}_{n}$) and $w\in L^{2}(0,T,X^{1})$ such that
\begin{itemize}
\item $w_{n}(t)\rightarrow w(t),\quad \partial_{t}w_{n}(t)\rightarrow w_{t}(t),$ weakly in $X^{1}$ and $(X^{1})^{\prime}$ respectively, and for $0\leq t\leq T$.
\item $||w_{n}-w||_{L^{2}(0,T,X^{0})}\rightarrow 0,\; n\rightarrow\infty,$ and the convergence is almost everywhere on $\Omega_{T}$.
\end{itemize}
In particular, $K$ is weakly compact in $W(0,T)$. Finally,  let $\{w_{n}\}_{n}$ be a sequence in $K$ with $w_{n}\rightarrow w$ weakly for some $w\in K$, and let $u_{n}=T(w_{n})$. The same arguments as those above, applied to $u_{n}$, and the property that $K$ is weakly compact in $W(0,T)$ imply the existence of a subsequence (denoted again by $u_{n}=T(w_{n})$) and $u\in L^{2}(0,T,X^{1})$ such that
 \begin{itemize}
\item[(i)] $u_{n}\rightarrow u,\; \partial_{t}u_{n}\rightarrow u_{t},$ weakly in $L^{2}(0,T,X^{1})$ and $L^{2}(0,T,(X^{1})^{\prime})$ respectively.
\item[(ii)] $u_{n}\rightarrow u$ in ${L^{2}(0,T,X^{0})}$ and almost everywhere on $\Omega_{T}$.
%\item[(iii)] $w_{n}\rightarrow w $ in ${L^{2}(0,T,X^{0})}$ and almost everywhere on $\Omega_{T}$.
\end{itemize}
These and the properties of (\ref{bil1}) also imply that $u_{n}(0)\rightarrow u(0)$ in $(X^{1})^{\prime}$ and that $\partial_{x}u_{n}\rightarrow \partial_{x}u$ weakly in ${L^{2}(0,T,X^{0})}$. In addition, hypotheses (H1), (H2), Remark \ref{remark1}, and property (iii) above imply that
$$A(w_{n})\rightarrow A(w), \quad B(w_{n})\rightarrow B(w), \quad G(w_{n})\rightarrow G(w), \quad \gamma(w_{n})\rightarrow \gamma(w),$$ in ${L^{2}(0,T,X^{0})}$. Therefore, if we take the limit in (\ref{eq:21lin}) then $u=T(w)$. On the other hand, since the whole sequence $\{u_{n}\}_{n}$ is bounded in $K$ which is weakly compact, then it converges weakly in $W(0,T)$. By uniqueness of solution of the problem (\ref{eq:21lin}), the weak limit must be $u=T(w)$. All this proves that $T$ is weakly continuous and therefore the Schrauder fixed-point theorem proves the existence of a solution $u$ of (\ref{eq:21}) which is in $K$ and therefore
$$u\in L^{2}(0,T,X^{1}), \quad \partial_{t}u\in L^{2}(0,T,(X^{1})^{\prime}),$$ and satisfying (\ref{5b}). Furthermore, (\ref{5b}) and (H1), (H2) imply that $u\in C(0,T,X^{1})$.

As for uniqueness, let $u_{1}, u_{2}$ be solutions of  (\ref{eq:21}) with the same initial condition and $u=u_{1}-u_{2}$. Then, for $\psi\in X_{0}^{1}$
\begin{eqnarray}
\langle u_{t},\psi\rangle+\int_{\Omega}\left(A(u_{1})\partial_{t}u_{1x}-A(u_{2})\partial_{t}u_{2x}\right))\cdot \psi_{x} dx&=&\nonumber\\
\underbrace{\int_{\Omega}\left(B(u_{1})u_{1x}-B(u_{2})u_{2x}\right))\cdot \psi dx}_{I_{1}}&&\nonumber\\
+\underbrace{\langle \partial_{x}(G(u_{1})-G(u_{2})),\psi\rangle}_{I_{2}}+\underbrace{\langle \gamma(u_{1})-\gamma(u_{2}),\psi\rangle}_{I_{3}}.&&\label{*7}
\end{eqnarray}
On the other hand, since
$$A(u_{1})\partial_{t}u_{1x}-A(u_{2})\partial_{t}u_{2x}=A(u_{1})\partial_{t}u_{x}+(A(u_{1})-A(u_{2}))\partial_{t}u_{2x},$$ then (\ref{*7}) can be rewritten as
\begin{eqnarray}
\langle u_{t},\psi\rangle+\int_{\Omega}\left(A(u_{1})\partial_{t}u_{x}\right))\cdot \psi_{x} dx
&=&\underbrace{\int_{\Omega}\left((A(u_{2})-A(u_{1}))\partial_{t}u_{2x}\right))\cdot \psi_{x} dx}_{I_{0}}\nonumber\\
&&+
\underbrace{\int_{\Omega}\left(B(u_{1})u_{1x}-B(u_{2})u_{2x}\right))\cdot \psi dx}_{I_{1}}\nonumber\\
&&+\underbrace{\langle \partial_{x}(G(u_{1})-G(u_{2})),\psi\rangle}_{I_{2}}\nonumber\\
&&+\underbrace{\langle \gamma(u_{1})-\gamma(u_{2}),\psi\rangle}_{I_{3}}.\label{*7b}
\end{eqnarray}
Note now that, by using (H1), (H2), (\ref{5b}), and Remark \ref{remark1}, the right-hand side of (\ref{*7b}) can be bounded in a similar way to that of (\ref{*6}). Furthermore, evaluating (\ref{*7b}) at $\psi=u_{t}$ and  because of (H1), the left-hand side is bounded from below leading to
$$||u_{t}(t)||_{1}^{2}\leq C ||u(t)||_{1}||u_{t}(t)||_{1},$$ for some constant $C$. Hence
\begin{eqnarray}
||u(t)||_{1}\leq ||u(0)||_{1}+C\int_{0}^{t}||u(s)||_{1}ds,\label{*8}
\end{eqnarray}
and Gronwall's lemma yields $u(t)=0, t\in [0,T]$.

We make use of (\ref{*8}), which is valid for any solutions $u_{1}, u_{2}$ of the first equation in (\ref{eq:21}), to prove the continuous dependence on the initial data. Using Gronwall's lemma, if $u=u_{1}-u_{2}$, from (\ref{*8}) we have
$$||u(t)||_{1}\leq ||u(0)||_{1}e^{tC},\, 0\leq t\leq T.$$ Therefore, if $\epsilon>0$ and $\delta=\epsilon e^{-TC}$, then

$$||u_{1}(0)-u_{2}(0)||_{1}<\delta\quad {\rm implies}\quad ||u_{1}(t)-u_{2}(t)||_{1}<\epsilon,\; 0\leq t\leq T.$$

Finally, the regularity of the solution can be derived from a classical bootstrapping argument and the additional hypotheses on the coefficients of (\ref{eq:21}).
\end{proof}

\section{Numerical approximation}
\label{sec3}
%\subsection{Spectral+SSP approach}
In this section the numerical approach, based on Legendre-Galerkin spectral semidiscretization in space and a temporal discretization with SSP methods, is introduced and analyzed. 
\subsection{The family of Legendre polynomials. Projection errors}
Collected here are some results on polynomial approximation that will be used below, cf. e.~g. \cite{BernardiM1997,CanutoHQZ1988} for details.
For an integer $N\geq 2$, $\mathbb{P}_{N}=\mathbb{P}_{N}(\Omega)$ will 
stand for the space of polynomials on $\Omega$ of degree at 
most $N$ and
\begin{eqnarray*}
\mathbb{P}_{N}^{0}=\mathbb{P}_{N}^{0}(\Omega):=\mathbb{P}_{N}(\Omega)\cap H_{0}^{1}(\Omega),
\end{eqnarray*}
is the subspace of polynomials in $\mathbb{P}_{N}(\Omega)$ vanishing on the boundary $\partial\Omega$. As usual $\mathbb{P}_{N}^{d}$ (resp. $(\mathbb{P}_{N}^{0})^{d}$) will denote the product of $d$ copies of $\mathbb{P}_{N}$ (resp. $\mathbb{P}_{N}^{0}$).

The $k$th Legendre polynomial $L_{k}$ is defined as the eigenfunction of the singular Sturm-Liouville problem on $(-1,1)$
$$-(pu')'+qu=\lambda wu,$$ with $p(x)=1-x^{2}, q(x)=0, w(x)=1$ and eigenvalue $\lambda=-k(k+1), k=0,1,\ldots$. The family $\{L_{k}\}_{k\geq 0}$ is orthogonal with respect to the weight function $w(x)$, and it is a subfamily of the Jacobi family of orthogonal polynomials with respect to the weight $(1-x^{2})^{\mu}, -1<\mu<1$. (The Legendre case corresponds to $\mu=0$.) For $u\in L^{2}(\Omega)$, the polynomial
$$P_{N}(x)=\sum_{k=0}^{N}\widehat{u}_{k}L_{k}(x),\; \widehat{u}_{k}=\frac{(u,L_{k})_{0}}{|L_{k}|_{2}^{2}},$$ is the orthogonal projection of $u$ on $\mathbb{P}_{N}$, that is
$$(P_{N}u,p)=(u,p),\quad p\in \mathbb{P}_{N}.$$
We also denote by $P_{N}^{10}v\in \mathbb{P}_{N}^{0}$ 
 the orthogonal projection of $v$ with respect 
to the inner product in $H_{0}^{1}$ given by (\ref{bil1}).

%\begin{eqnarray*}
%[\phi,\psi]_{w}=\int_{\Omega} \phi_{x}\cdot\psi_{x}dx.
%\end{eqnarray*}
Some estimates for the projection errors will be necessary in the analysis below. They are, \cite{BernardiM1989,BernardiM1997}
\begin{eqnarray}
|v-P_{N}v|_{0}&\leq &C N^{-s}|v|_{s},\quad v\in H^{s},\quad s\geq 0,\label{c42a}\\
|v-P_{N}v|_{r}&\leq &C N^{r-s}|v|_{s},\quad v\in H^{s},\quad 1\leq r\leq s,\quad r\; {\rm integer},\nonumber
\end{eqnarray}
and for $v\in H^{s}\cap H_{0}^{1}$
%\begin{eqnarray}
%||v-P_{N}^{10}v||_{1,w}+N||v-P_{N}^{10}v||_{0,w}&\leq &C N^{1-s}||v||_{s,w},\nonumber\\
%&&s\geq 1, -1<\mu\leq 0,\label{c42c}\\
%||v-P_{N}^{10}v||_{1,w}+N^{1-\mu}||v-P_{N}^{10}v||_{0,w}&\leq &C N^{1-s}||v||_{s,w},\nonumber\\&& s\geq 1, 0<\mu\leq 1.\label{c42d}
%\end{eqnarray}
\begin{eqnarray}
|v-P_{N}^{10}v|_{1}\leq C N^{1-s}|v|_{s},\quad
%\nonumber\\
%&&
s\geq 1.\label{c42c}
%||v-P_{N}^{10}v||_{1,w}+N^{1-\mu}||v-P_{N}^{10}v||_{0,w}&\leq &C N^{1-s}||v||_{s,w},
%\nonumber\\
%&& 
%s\geq 1, 0<\mu\leq 1.\label{c42d}
\end{eqnarray}
The previous definitions and the estimates (\ref{c42a}), (\ref{c42c}) will be used componentwise when dealing with $v=(v_{1},\ldots,v_{d})^{T}\in X^{s}$ or $X_{0}^{s}, s\geq 0$, and by abuse of notation we will denote by $P_{N}v$ and $P_{M}^{10}v$ the vectors
$$P_{N}v=(P_{N}v_{1},\ldots,P_{N}v_{d})^{T},\quad P_{N}^{10}v=(P_{N}^{10}v_{1},\ldots,P_{N}^{10}v_{d})^{T},$$ respectively.

The projection operator $P_{N}^{10}$, corresponding to the bilinear form (\ref{bil1}), can be extended to functionals of the form
\begin{eqnarray}
l_{a(U)}(\varphi,\psi)=\int_{\Omega}a(U)\varphi_{x}\psi_{x}dx,\quad \varphi,\psi\in H^{1},\label{bil4}
\end{eqnarray}
for a fixed $U\in H_{0}^{1}$, $a=a(U)$ some Lipschitz continuous function bounded above and below by positive constants, as well as the vector version
\begin{eqnarray}
L_{C(U)}(\varphi,\psi)=\int_{\Omega}C(U)\varphi_{x}\cdot\psi_{x}dx,\quad \varphi,\psi\in X^{1},\label{bil5}
\end{eqnarray}
where $C=C(U)=(c_{ij}(U))_{i,j=1}^{d}$ satisfies (H1), with $c_{ij}(U), i,j=1,\ldots,d$, bounded. Thus, if
$v\in H_{0}^{1}$, then the orthogonal projection 
$\overline{v}\in \mathbb{P}_{N}^{0}$ of $v$ with 
respect to the bilinear form involving (\ref{bil4}) 
\begin{eqnarray*}
{a}_{U}(\varphi,\psi)=(\varphi,\psi)_{0}+l_{a(U)}(\varphi,\psi),\quad \varphi,\psi\in H^{1},
\end{eqnarray*}
is defined as $\overline{v}=R_{N}v\in 
\mathbb{P}_{N}^{0}$ such that
\begin{eqnarray*}
{a}_{U}(\overline{v}-v,\psi)=0,\quad \psi\in \mathbb{P}_{N}^{0}.\label{ad27}
\end{eqnarray*}
For this projection, following \cite{BernardiM1989}, it holds that
\begin{eqnarray}
|v-\overline{v}|_{1}+N|v-\overline{v}|_{0}\leq C N^{1-m}|v|_{m},\label{ad27b}
\end{eqnarray}
for $v\in H^{m}\cap H_{0}^{1}, m\geq 1$. Furthermore, a similar argument to that exposed in 
\cite{EAAD20} shows that
%a 
%generalized estimate can be obtained as follows. 
%If $v\in H_{w}^{m}, m\geq 2$, let $u^{N}\in\mathbb{P}_{N}$ 
%be a polynomial such that, \cite{CanutoHQZ1988}
%\begin{eqnarray}
%||u^{N}-v||_{k,w}\leq C N^{k-m}||v||_{m,w},\; 0\leq k\leq 2.\label{ad27c}
%\end{eqnarray}
%By  using (\ref{ad27b}), (\ref{ad27c}) and the 
%inverse inequalities, \cite{BernardiM1989}
%\begin{eqnarray*}
%||\psi||_{s,w}\leq CN^{2(s-r)}||\psi||_{r,w},\; \psi\in\mathbb{P}_{N},\; 0\leq r\leq s,\label{c41}
%\end{eqnarray*}
%we have
\begin{eqnarray}
|v-\overline{v}|_{2}
\leq  C N^{3-m}|v|_{m}.\label{ad27d}
\end{eqnarray}
Similarly, the corresponding orthogonal projection 
$$\overline{v}\in (\mathbb{P}_{N}^{0})^{d}$$ of $v=(v_{1},\ldots,v_{d})^{T}\in X^{1}$ with 
respect to the functional defined from (\ref{bil5}) 
\begin{eqnarray*}
\mathcal{C}_{U}(\varphi,\psi)=\langle\varphi,\psi\rangle+L_{C(U)}(\varphi,\psi)\; \varphi,\psi\in X^{1},\label{ad27e}
\end{eqnarray*}
is given by $\overline{v}=(\overline{v}_{1},\ldots,\overline{v}_{d})^{T}, $ such that
 \begin{eqnarray}
\mathcal{C}_{U}(\overline{v}-v,\psi)=0,\; \psi\in (\mathbb{P}_{N}^{0})^{d}.\label{ad27ee}
\end{eqnarray}
The corresponding estimates in the norm $||\cdot ||_{k}, k=0,1,2$, are derived by components from (\ref{ad27b}), (\ref{ad27d}).

\subsection{Legendre-Galerkin spectral semidiscretization}
\label{sec31}
Let $N\geq 2$ be an integer,  and $u_{0}\in X_{0}^{1}$. 
The semidiscrete Galerkin approximation to (\ref{eq:21}) is defined as 
the function $u^{N}:[0,T]\rightarrow (\mathbb{P}_{N}^{0})^{d}$ satisfying
\begin{eqnarray}
\mathcal{A}_{u^{N}}(u_{t}^{N},\psi)=\mathcal{B}_{u^{N}}(u^{N},\psi),\quad \psi\in (\mathbb{P}_{N}^{0})^{d},\label{ad31a}
\end{eqnarray}
where $\mathcal{A}, \mathcal{B}$ are defined in (\ref{eq:22}), and with 
\begin{eqnarray}
\mathcal{A}_{u^{N}}(u_{N}(0),\psi)=\mathcal{A}_{u^{N}}(u_{0},\psi),\quad \psi\in (\mathbb{P}_{N}^{0})^{d}.\label{ad31b}
\end{eqnarray}

Before analyzing the existence of the semidiscrete solution and its convergence to the exact solution of (\ref{eq:21}), it may be worth mentioning some details of the implementation. This is typically made via the so-called Galerkin-Numerical Integration (G-NI) formulation,  which is obtained from the use of quadrature formulas of Gauss type and different families of weights and nodes. Considered here is the following G-NI formulation of the Legendre-Galerkin method, based on the representation in the nodal basis of $\mathbb{P}_{N}^{0}$
%
%
%The analysis of the collocation methods requires 
%the introduction of discrete norms. Let 
%$\{x_{j},w_{j}\}_{j=0}^{N}$ be the nodes and weights 
%of the Gauss-Lobatto quadrature related to $w(x)$, 
%\cite{BernardiM1997,CanutoHQZ1988,Mercier}. For 
%$\phi, \psi$ continuous on $\overline{\Omega}$, the 
%discrete inner product based on the Gauss-Lobatto 
%data is denoted by
%\begin{eqnarray}
%\left(\phi,\psi\right)_{N,w}=\sum_{j=0}^{N}\phi(x_{j})\psi(x_{j})w_{j},\label{ad12}
%\end{eqnarray}
%with associated norm $||\phi||_{N,w}=\left(\phi,\phi\right)_{N,w}^{1/2}$. 
%We recall that, \cite{CanutoHQZ1988}
%\begin{eqnarray}
%\left(\phi,\psi\right)_{N,w}=\left(\phi,\psi\right)_{w},\label{ad13}
%\end{eqnarray}
%if $\phi\psi\in\mathbb{P}_{2N-1}$. 
%
%y way of illustration,
%the G-NI formulation of the Legendre-Galerkin method ($w(x)=1$) is considered. (The description for other Jacobi polynomials will only change the basis of representation and the quadrature nodes, see below.)
%The semidiscrete Galerkin approximation $R^{N}:[0,T]\rightarrow (\mathbb{P}_{N}^{0})^{2}$ satisfying (\ref{ad31a}), (\ref{ad31b}) must be represented in a basis of $(\mathbb{P}_{N}^{0})^{d}$. 
%
%From the nodal basis
\begin{eqnarray}
\psi_{j}(x)=\frac{1}{N(N+1)}\frac{(1-x^{2})}{(x_{j}-x)}\frac{L_{N}^{\prime}(x)}{L_{N}(x_{j})},\;j=0,\ldots,N,\label{nodalb}
\end{eqnarray}
where $x_{j}, j=0,\ldots,N$, denotes the nodes associated 
to the Legendre-Gauss-Lobatto quadrature, \cite{BernardiM1997,CanutoHQZ1988}, $L_{N}$ is 
the $N$-th Legendre polynomial. 
Since (\ref{nodalb}) satisfies
\begin{eqnarray}
\psi_{j}(x_{k})=\delta_{jk},\; j,k=0,\ldots,N,\label{nodalb2}
\end{eqnarray}
and $x_{0}=-1, x_{N}=1$, then
we write
\begin{eqnarray}
&&u^{N}(x,t)=(u_{1}^{N}(x,t),\ldots,u_{d}^{N}(x,t))^{T},\nonumber\\
&&u_{j}^{N}(x,t)=\sum_{k=1}^{N-1}U_{jk}(t)\psi_{k}(x),\; j=1,\ldots,d,\label{Leg_Gal}
\end{eqnarray}
(this includes directly the boundary conditions into the representation) and it is clear from (\ref{nodalb2}), (\ref{Leg_Gal}) that
\begin{eqnarray*}
U_{jk}(t)=u_{j}^{N}(x_{k},t),\quad k=1,\ldots,N-1, \quad j=1,\ldots,d.
\end{eqnarray*}
By inserting (\ref{Leg_Gal}) into (\ref{ad31a}) evaluated at
$$\psi=(\psi_{j},0,\ldots,0)^{T},\ldots, (0,\ldots,0,\psi_{j})^{T},\; j=1,\ldots,N-1,$$ the  G-NI formulation consists of approximating the resulting integrals by the Legendre-Gauss-Lobatto quadrature leading to a $d(N-1)$ ode system for
$U=(U_{jk}(t)), 1\leq j\leq d, 1\leq k\leq N-1$, of the form, cf. \cite{EAAD20}
\begin{eqnarray}
K(u^{N})\frac{d}{dt}U=L(u^{N})U+\mathcal{H}(u^{N}),\label{semidL1}
\end{eqnarray}
where 
\begin{eqnarray}
&&K(u^{N})=\left(K^{(0)}+K^{(2)}(A)\right),\; L(u^{N})=K^{(2)}(B)+K^{(1)}(G),\nonumber\\
&&K^{(0)}={\rm diag}\left(K_{N}^{(0)},\ldots^{d)},K_{N}^{(0)}\right),\quad 
K_{N}^{(0)}={\rm diag}(w_{1},\ldots,w_{N-1}),\label{GNI1}\\
%&&L(R^{N})=K^{(2)}(B)+K^{(1)}(G),\label{GNI1b}\\
&&K^{(2)}(C)=\left(K_{pq}^{(2)}(C)\right)_{p,q=1}^{d},\nonumber\\ 
&&\left(K_{pq}^{(2)}(d)\right)_{jk}=\sum_{h=0}^{N}d_{pq}(U_{h})
\frac{d\psi_{j}}{dx}(x_{h})\frac{d\psi_{k}}{dx}(x_{h})w_{h},\nonumber\\
&& j,k=1,\ldots, N-1,\; p,q=1,2,\cdots,d,\; C=A\;{\rm or}\; B,\label{GNI2}\\
&&K^{(1)}(G)=
\left(K_{pq}^{(1)}(G)\right)_{p.q=1}^{d}\quad \left(K_{pq}^{(1)}(G)\right)_{jk}=g_{pq}(U_{j})
\frac{d\psi_{k}}{dx}(x_{j})w_{j},\nonumber\\
&& j,k=1,\ldots,N-1,\; g=G',\label{GNI3}\\
&&\mathcal{H}(u^{N})={\rm diag}\left(\Gamma_{N}^{(1)},\ldots, \Gamma_{N}^{(d)}\right),\quad (\Gamma_{N}^{(p)})_{j}=w_{j}\gamma_{p}(U_{j}), \nonumber\\
&& p=1,\ldots,d,\; j=1,\ldots,N-1,\label{GNI4}
\end{eqnarray}
where $U_{h}=(U_{1h},\ldots, U_{dh}), h=1,\ldots,N-1, U_{0}=U_{N}=0$ (because of the boundary conditions).
The ode system (\ref{semidL1}) is completed with the initial values
$$u_{j}^{N}(0)\in \mathbb{R}^{N-1}, j=1,\ldots d,$$ from the values of the components of $u_{0}$ at the nodes $x_{j}$.  Formulas (\ref{GNI1})-(\ref{GNI4}) are general; in some particular cases (constant boundary conditions, $A$ or $B$ independent of $u^{N}$, etc) they can be simplified. The grid values of the derivatives can be computed from the Legendre differentiation matrix, and shows the equivalence with a collocation method, cf. \cite{CanutoHQZ1988} for details.
%\subsubsection{Existence and analysis of convergence}

We now study the existence, uniqueness and convergence of the semidiscrete approximation defined by (\ref{ad31a}). Having in mind the steps of the proof  of Theorem 2.2 of \cite{EAAD20}, the presence here of new nonlinearities in the pseudo-parabolic part introduces some relevant differences.
\begin{theorem}
\label{theorem32} Let $u_{0}\in X^{1}$. For all $t\in [0,T]$, there is a 
unique solution $u^{N}(t)$ of (\ref{ad31a}), (\ref{ad31b}) such that
\begin{eqnarray}
||u^{N}||_{L^{\infty}(0,T,X^{1})}\leq C,\label{exist}
\end{eqnarray}
for some constant depending on $||u_{0}||_{1}$. 
Furthermore, let $m\geq 1$, and 
assume that $u_{0}\in H_{0}^{m}$, $A, B, G,\gamma\in 
C^{m}(X^{m})$. If $u$ 
is the solution of (\ref{eq:21}), then
\begin{eqnarray}
||u^{N}-u||_{L^{\infty}(0,T,X^{0})}\leq C N^{-m},\label{ad37c}
\end{eqnarray}
for some constant $C$ independent of $N$. 
If, in addition, the elements of $B$ have uniformly bounded derivatives, then
\begin{eqnarray}
||u^{N}-u||_{L^{\infty}(0,T,X^{1})}\leq C N^{1-m}.\label{ad37b}
\end{eqnarray}
%for some constant $C$ independent of $N$. 
\end{theorem}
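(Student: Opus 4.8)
The plan is to mimic the two-level structure already used for the well-posedness theory: first establish existence and the a priori bound \eqref{exist} for the semidiscrete problem via a linearized problem plus a fixed-point argument, then derive the error estimates \eqref{ad37c}, \eqref{ad37b} by comparing $u^N$ with a suitable projection of the exact solution $u$. For existence, I would fix $U^N\in C(0,T,(\mathbb{P}_N^0)^d)$ and consider the linear semidiscrete problem $\mathcal{A}_{U^N}(u_t^N,\psi)=\mathcal{B}_{u^N}(u^N,\psi)$ for $\psi\in(\mathbb{P}_N^0)^d$; by (H1), the matrix $K(U^N)=K^{(0)}+K^{(2)}(A(U^N))$ in \eqref{semidL1} is symmetric positive definite (this is exactly \eqref{bil32} restricted to $(\mathbb{P}_N^0)^d$), hence invertible, so the ode system has a unique local solution. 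Testing with $\psi=u_t^N$, using (H1), (H2) and Remark \ref{remark1} exactly as in the derivation of \eqref{eq:213b}, gives $||u_t^N(t)||_1\leq C||u^N(t)||_1$ with $C$ independent of $N$, and then Gronwall (as in \eqref{eq:214}--\eqref{eq:216}) yields the uniform bound and global existence on $[0,T]$. The nonlinear problem \eqref{ad31a} is then recovered by a Schauder fixed-point argument on the finite-dimensional set of $U^N$ satisfying the bound, the continuity of the map $U^N\mapsto u^N$ following from (H1), (H2) and the Lipschitz dependence of $A,B,G,\gamma$ on their arguments — this is the finite-dimensional analogue of the argument in Theorem \ref{theorem21}, and is actually easier here since all norms on $(\mathbb{P}_N^0)^d$ are equivalent (for fixed $N$) and weak/strong convergence coincide.

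For the error estimates, let $\overline{u}(t)=R_N u(t)\in(\mathbb{P}_N^0)^d$ be the elliptic projection of the exact solution with respect to the form $\mathcal{C}_{U}$ built from $C=A(u^N)$, cf. \eqref{ad27ee} (note the projection is time-dependent because the coefficient matrix $A(u^N(t))$ is), and split $u^N-u=(u^N-\overline{u})+(\overline{u}-u)=:\theta+\rho$. The projection error $\rho$ is controlled by \eqref{ad27b}: $||\rho(t)||_1+N||\rho(t)||_0\leq CN^{1-m}||u(t)||_m$, and by Theorem \ref{theorem21} (regularity part) $||u||_{L^\infty(0,T,X^m)}\leq C$, so $||\rho||_{L^\infty(0,T,X^1)}\leq CN^{1-m}$ and $||\rho||_{L^\infty(0,T,X^0)}\leq CN^{-m}$; one also needs $||\rho_t||_0\leq CN^{-m}$, which follows by differentiating the defining identity \eqref{ad27ee} in $t$ — here a term involving $\partial_t A(u^N)=A'(u^N)\partial_t u^N$ appears, bounded using \eqref{exist} and the $C^m$ regularity of $A$. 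For $\theta$, subtract the equation satisfied by $\overline{u}$ (obtained from the weak form \eqref{eq:21} for $u$, the Galerkin equation \eqref{ad31a} for $u^N$, and the projection identity) to get an equation of the form $\mathcal{A}_{u^N}(\theta_t,\psi)=\langle\text{error terms},\psi\rangle$ for $\psi\in(\mathbb{P}_N^0)^d$; testing with $\psi=\theta_t$ and using coercivity \eqref{bil32} on the left gives $||\theta_t(t)||_1\leq C(||\theta(t)||_1+||\rho(t)||_1+||\rho_t(t)||_0+\cdots)$, and integrating plus Gronwall yields $||\theta||_{L^\infty(0,T,X^1)}\leq CN^{1-m}$. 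Combining with the bound on $\rho$ proves \eqref{ad37b} when $B$ has bounded derivatives; for the sharper $X^0$ estimate \eqref{ad37c} one uses a duality/Aubin--Nitsche-type argument or, following \cite{EAAD20}, tests against $\theta$ itself and exploits the extra power of $N$ in the $|\cdot|_0$ part of \eqref{ad27b} together with the antisymmetry of the $G$-term after integration by parts, $\langle\partial_x G(u^N)-\partial_x G(u),\theta\rangle=-\int_\Omega(G(u^N)-G(u))\cdot\theta_x\,dx$, which costs one derivative on $\theta$ but is absorbed by the coercive term.

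The main obstacle is handling the nonlinear pseudo-parabolic coefficient $A=A(u^N)$ throughout. Two specific difficulties arise: first, the elliptic projection $\overline{u}=R_N u$ is with respect to a form whose coefficient depends on the unknown $u^N$, so its time derivative brings in $\partial_t u^N$ and one must be careful that the resulting estimates for $\rho$ and $\rho_t$ remain uniform in $N$ — this is why \eqref{exist} must be proved first and why the $C^m$ hypothesis on $A$ is needed. Second, in the error equation for $\theta$ the difference $A(u^N)\partial_t u^N_x - A(u)\partial_t u_x$ must be split as $A(u^N)\partial_t\theta_x + A(u^N)\partial_t\rho_x + (A(u^N)-A(u))\partial_t u_x$ (compare the analogous splitting of the $B$-term in \eqref{*7b}); the first piece combines with the left-hand side to restore coercivity, the second is an $O(N^{1-m})$ projection-derivative error, and the third is bounded by the Lipschitz constant $L$ of $A$ times $||u^N-u||_0\,||\partial_t u_x||_\infty$ — the factor $||\partial_t u_x||_\infty$ requiring $m$ large enough (or an inverse inequality) and hence accounting for the role of $m$ in the hypotheses. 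Once these nonlinear terms are organized so that everything is bounded by $C(||\theta||_1+N^{1-m})$, the conclusion follows by Gronwall exactly as in the linear case of \cite{EAAD20}.
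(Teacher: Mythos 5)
Your proposal follows essentially the same route as the paper: global existence of the semidiscrete solution from the finite-dimensional ODE system plus the energy test with $\psi=u_t^N$ and Gronwall for \eqref{exist}, the error decomposition through the nonlinear elliptic projection \eqref{ad27ee} with coefficient $A(u^N)$, an energy/Gronwall argument for the $X^1$ estimate, and a Lax--Milgram duality argument with $P_N^{10}\varphi$ for the $X^0$ estimate. The only deviations are minor: the paper dispenses with your Schauder fixed-point layer (standard ODE theory is applied directly to the nonlinear system \eqref{32*}), it resolves your duality-versus-direct-test alternative for \eqref{ad37c} in favour of duality, and it controls the term $(A(u)-A(u^N))u_{tx}$ via the one-dimensional embedding $H^1\hookrightarrow L^\infty$ applied to $e^N$ together with \eqref{bound1}, rather than via an $L^\infty$ bound on $u_{tx}$.
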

\begin{proof}
We first prove the local existence by using similar arguments to those of the first part of the proof of Lemma \ref{lemma1}. We consider an orthonormal basis $\{W_{1},\ldots,W_{N-1}\}$ of $\mathbb{P}_{N}^{0}$ and write 
\begin{eqnarray}
&&u^{N}(x,t)=(u_{1}^{N}(x,t),\ldots,u_{d}^{N}(x,t))^{T},\nonumber\\
&&u_{j}^{N}(x,t)=\sum_{k=1}^{N-1}U_{jk}(t)W_{k}(x),\quad j=1,\ldots,d,\label{31*}
\end{eqnarray}
Then (\ref{ad31a})  leads to an equivalent ode system of the form
\begin{eqnarray}
U_{js}'(t)+\sum_{k=1}^{N-1}L_{A(u^{N})}(W_{k},W_{s})U_{jk}'(t)&=&\sum_{k=1}^{N-1}L_{B(u^{N})}(W_{k},W_{s})U_{jk}(t)\nonumber\\
&&+\sum_{k=1}^{N-1}\langle G'(u^{N})\partial_{x}W_{k},W_{s}\rangle U_{jk}(t)\nonumber\\
&&+
\langle \gamma(u^{N}),W_{s}\rangle,\label{32*}
\end{eqnarray} 
for $s=1,\ldots,N-1,\; j=1,\ldots,d$, the functional $L$ is defined from (\ref{bil5}), and where if
$$u_{0}=(u_{0}^{(1)},\ldots,u_{0}^{(d)})^{T}, P_{N}^{10}u_{0}^{(j)}=\sum_{k=1}^{N-1}u_{0k}^{(j)}W_{k}, j=1,\ldots,d,$$ then $U_{js}(0)=u_{0s}^{(j)}, j=1,\ldots,d, s=1,\ldots,N-1$. Due to (H1), (H2), and Remark \ref{remark1}, standard ode theory can be applied to (\ref{32*}), defining a solution (\ref{31*}) locally in $t$. Existence of $u^{N}$ for all $t\in [0,T]$ is derived as follows. Let $t\in [0,T]$. By evaluating (\ref{ad31a}) at $\psi=u_{t}^{N}$ and using (H1), (H2), and Remark \ref{remark1}, similar arguments to those used in  Lemma \ref{lemma1} and Theorem \ref{theorem21} lead to
\begin{eqnarray*}
||u_{t}^{N}||_{1}\leq C ||u^{N}||_{1},
\end{eqnarray*}
for some constant $C$. Then, integrating over an interval
$(0,t)\subset [0,T]$ leads to
\begin{eqnarray*}
||u^{N}||_{1} = \left\|u^{N}(0)+\int_{0}^{t}u_{t}^{N}(s)ds\right\|_{1} %%\nonumber\\
\leq  ||u^{N}(0)||_{1}+C\int_{0}^{t}||u^{N}(s)||_{1}ds.\label{ad_226}
\end{eqnarray*} 
From the properties of the orthogonal projection we have $||u^{N}(0)||_{1}\leq C||u_{0}||_{1}$. 
Gronwall's lemma implies
the existence of $u^{N}(t)$ for all $t\in [0,T]$ 
and (\ref{exist}).

We now prove the error estimates (\ref{ad37c}) and (\ref{ad37b}). Let $\overline{u}$ be the projection of the solution $u$ of (\ref{eq:21}) with respect to (\ref{ad27e}) with $C=A, U=u^{N}$, and define
\begin{eqnarray*}
\eta:=\overline{u}-u, e^{N}:=u^{N}-u, 
\xi^{N}:=\overline{u}-u^{N}=\eta-e^{N}\in\mathbb{P}_{N}^{0}.\label{var}
\end{eqnarray*}
%Note that, due to (\ref{ad27}), 
%$A(\eta_{t},\psi)=0, \forall \psi\in \mathbb{P}_{N}^{0}$, 
%holds. Thus, (\ref{ad11a}) and (\ref{ad31a}) imply, 
%for $\psi\in \mathbb{P}_{N}^{0}$
%\begin{eqnarray}
%A(\xi_{t}^{N},\psi)=-A(e_{t}^{N},\psi)
%=-(B(v^{N},\psi)-B(v,\psi)).\label{ad310}
%\end{eqnarray}
%$e^{N}=u^{N}-u$. 

From (\ref{eq:21}) and (\ref{ad31a}) we have, for $\psi\in (\mathbb{P}_{N}^{0})^{d}$
\begin{eqnarray}
\mathcal{A}_{u^{N}}(u_{t}^{N},\psi)-\mathcal{A}_{u}(u_{t},\psi)=\mathcal{B}_{u^{N}}(u^{N},\psi)-\mathcal{B}_{u}(u,\psi),\label{des0}
\end{eqnarray}
Note that due to (\ref{ad27ee}), the left-hand side of (\ref{des0}) has the form
\begin{eqnarray*}
\mathcal{A}_{u^{N}}(u_{t}^{N},\psi)-\mathcal{A}_{u}(u_{t},\psi)&=&
\mathcal{A}_{u^{N}}(u_{t}^{N}-u_{t},\psi)+\mathcal{A}_{u^{N}}(u_{t},\psi)-\mathcal{A}_{u}(u_{t},\psi)\\
&=&\mathcal{A}_{u^{N}}(\xi_{t}^{N},\psi)+\mathcal{A}_{u^{N}}(u_{t},\psi)-\mathcal{A}_{u}(u_{t},\psi).
\end{eqnarray*}
Therefore, (\ref{des0}) can be written as
\begin{eqnarray}
\langle \xi_{t}^{N},\psi\rangle+\underbrace{\int_{\Omega}A(u^{N})\xi_{tx}^{N}\cdot\psi_{x}dx}_{J_{0}}&=&\underbrace{\int_{\Omega}\left(A(u)-A(u^{N})\right)u_{tx}\cdot\psi_{x}dx}_{J_{1}}
\nonumber\\
&&+\underbrace{\int_{\Omega} \left(B(u^{N})-B(u)\right)u_{x}^{N}\cdot\psi_{x}dx}_{J_{2}}\nonumber\\
&&+
\underbrace{\int_{\Omega} B(u)e_{x}^{N}\cdot\psi_{x}dx}_{J_{3}}\nonumber\\
&&+\underbrace{\int_{\Omega}\partial_{x}\left(G(u^{N})-G(u)\right)\cdot\psi dx}_{J_{4}}\nonumber\\
&&+\underbrace{\int_{\Omega}\left(\gamma(u^{N})-\gamma(u)\right)\cdot\psi dx}_{J_{5}}.\label{des1}
\end{eqnarray}
We now estimate each of the integrals of (\ref{des1}). Note that from the hypothesis (H1), Remark \ref{remark1}, and (\ref{bound1}) in Theorem \ref{theorem21},  we have
$$|J_{1}|\leq C ||e^{N}||_{1}||\psi||_{1}.$$ Similarly, from hypothesis (H2) and Remark \ref{remark1}
$$|J_{k}|\leq C ||e^{N}||_{1}||\psi||_{1}, \quad k=2,3,\quad
|J_{k}|\leq C ||e^{N}||_{0}||\psi||_{0},\quad k=4,5.$$
On the other hand, when (\ref{des1}) is evaluated at $\psi=\xi_{t}^{N}$ then
$$|J_{0}|\geq \alpha ||\xi_{tx}^{N}||_{0}.$$
All this above, when applied to (\ref{des1}) with $\psi=\xi_{t}^{N}$ leads to
\begin{eqnarray*}
||\xi_{t}^{N}||_{1}\leq C ||e^{N}||_{1}.
\label{ad_229}
\end{eqnarray*}
Since $e^{N}=\eta-\xi^{N}$ it holds that
\begin{eqnarray*}
||\xi^{N}(t)||_{1}=\left\|\int_{0}^{t}\xi_{t}^{N}(s)ds\right\|_{1}
\leq  C\int_{0}^{t}(||\xi^{N}(s)||_{1}+||\eta(s)||_{1})ds.\label{ad312}
\end{eqnarray*}
Note on the other hand that (\ref{ad31b}) implies that $u^{N}(0)=\overline{u}(0)$. Thus
$\xi^{N}(0)=0$.
Therefore, from Gronwall's lemma, 
the property $e^{N}=\eta-\xi^{N}$, (\ref{ad27b}), and Theorem \ref{theorem21}, (\ref{ad37b}) follows.

In order to prove the second estimate (\ref{ad37c}), we first apply Lax-Milgram 
theorem, \cite{Evans}, to ensure  the existence of 
$\varphi=(\varphi_{1},\ldots,\varphi_{d})^{T} \in X_{0}^{1}$ such that, \cite{BernardiM1989,CanutoHQZ1988}
\begin{eqnarray}
\mathcal{A}_{u^{N}}(\psi,\varphi)=\langle\xi_{t}^{N},\psi\rangle,\quad \psi\in X_{0}^{1}.\label{ad91}
\end{eqnarray}
with $\varphi\in X^{2}$ and
\begin{eqnarray}
||\varphi||_{2}\leq C ||\xi_{t}^{N}||_{0}.\label{ad92}
\end{eqnarray}
We evaluate (\ref{ad91}) at $\psi=\xi_{t}^{N}$ and write, cf. \cite{EAAD20}
\begin{eqnarray}
||\xi_{t}^{N}||_{0}^{2}= \mathcal{A}_{u^{N}}(\xi_{t}^{N},\varphi)=\mathcal{A}_{u^{N}}(\xi_{t}^{N},\varphi-P_{N}^{10}\varphi)+\mathcal{A}_{u^{N}}(\xi_{t}^{N},P_{N}^{10}\varphi).\label{ad93}
\end{eqnarray}
As for the first term on the right-hand side of (\ref{ad93}), note that hypothesis (H1), (\ref{c42c}), and (\ref{ad92}) imply
\begin{eqnarray*}
\mathcal{A}_{u^{N}}(\xi_{t}^{N},\varphi-P_{N}^{10}\varphi) &\leq & C||\xi_{t}^{N}||_{1}||\varphi-P_{N}^{10}\varphi||_{1} \leq  C N^{-1}||\xi_{t}^{N}||_{1}||\varphi||_{2}\\
& \leq & C N^{-1}||\xi_{t}^{N}||_{1}||\xi_{t}^{N}||_{0}.\label{ad94}
\end{eqnarray*}
On the other hand, using (\ref{ad27ee}), the second term is written as
\begin{eqnarray}
\mathcal{A}_{u^{N}}(\xi_{t}^{N},P_{N}^{10}\varphi)&=&-\mathcal{A}_{u^{N}}(e_{t}^{N},P_{N}^{10}\varphi)=-\mathcal{A}_{u^{N}}(u_{t}^{N},P_{N}^{10}\varphi)+
\mathcal{A}_{u^{N}}(u_{t},P_{N}^{10}\varphi)\nonumber\\
&=&-\mathcal{B}_{u^{N}}(u^{N},P_{N}^{10}\varphi)\nonumber\\
&&+\mathcal{A}_{u}(u_{t},P_{N}^{10}\varphi)+\mathcal{A}_{u^{N}}(u_{t},P_{N}^{10}\varphi)-\mathcal{A}_{u}(u_{t},P_{N}^{10}\varphi)\nonumber\\
&=&-\mathcal{B}_{u^{N}}(u^{N},P_{N}^{10}\varphi)+\mathcal{B}_{u}(u,P_{N}^{10}\varphi)
\nonumber\\
&&+\mathcal{A}_{u^{N}}(u_{t},P_{N}^{10}\varphi)-\mathcal{A}_{u}(u_{t},P_{N}^{10}\varphi)\nonumber\\
&=&\underbrace{-\mathcal{B}_{u^{N}}(u^{N},P_{N}^{10}\varphi)+\mathcal{B}_{u^{N}}(u,P_{N}^{10}\varphi)}_{K_{1}}\nonumber\\
&&-\underbrace{\mathcal{B}_{u^{N}}(u,P_{N}^{10}\varphi)+\mathcal{B}_{u}(u,P_{N}^{10}\varphi)}_{K_{2}}\nonumber\\
&&+\underbrace{\mathcal{A}_{u^{N}}(u_{t},P_{N}^{10}\varphi)-\mathcal{A}_{u}(u_{t},P_{N}^{10}\varphi)}_{K_{3}}.\label{des3}
\end{eqnarray}
We now estimate each of the differences $K_{i}$ in (\ref{des3}). The first $K_{1}$ is written as $K_{1}=K_{11}+K_{12}$ with
\begin{eqnarray*}
K_{11}&=&\int_{\Omega} B(u^{N})e_{x}^{N}\cdot(\varphi-P_{N}^{10}\varphi)_{x}dx+\langle\partial_{x}\left(G(u^{N})-G(u)\right),\varphi-P_{N}^{10}\varphi\rangle\\
&&+\langle\left(\gamma(u^{N})-\gamma(u)\right),\varphi-P_{N}^{10}\varphi\rangle\\
K_{12}&=&
-\int_{\Omega} B(u^{N})e_{x}^{N}\cdot\varphi_{x}dx+\langle\partial_{x}\left(G(u^{N})-G(u)\right),\varphi\rangle\\
&&+\langle\left(\gamma(u^{N})-\gamma(u)\right),\varphi\rangle.
%\\
%&&+\langle\partial_{x}\left(G(u^{N})-G(u)\right),\varphi-P_{N}^{10}\rangle-\langle\partial_{x}\left(G(u^{N})-G(u)\right),\varphi\rangle\\
%&&+\langle\left(\gamma(u^{N})-\gamma(u)\right),\varphi-P_{N}^{10}\rangle-\langle\left(\gamma(u^{N})-\gamma(u)\right),\varphi\rangle
\end{eqnarray*}
Observe that from hypothesis (H2), (\ref{c42c}), and (\ref{ad92}) we have
\begin{eqnarray}
|K_{11}|&\leq &C||e^{N}||_{1}||\varphi-P_{N}^{10}\varphi||_{1} \leq  C N^{-1}||e^{N}||_{1}||\varphi||_{2}\nonumber\\
& \leq  &C N^{-1}||e^{N}||_{1}||\xi_{t}^{N}||_{0}.\label{ad94b}
\end{eqnarray}
The estimate for the first term of $K_{12}$ makes use of the hypothesis that the elements of $B$ are bounded with bounded derivatives. Arguing by components, an integration by parts leads to integrals of the form
$$\int_{\Omega}e_{j}^{N}\left(b_{ij}(u^{N})\varphi_{ixx}+(b_{ij}'(u^{N})\cdot u_{x}^{N})\varphi_{ix}\right)dx,$$
and, along with hypothesis (H2) and (\ref{ad92}), $K_{12}$ can be bounded as
\begin{eqnarray}
|K_{12}|\leq C||e^{N}||_{0}||\varphi||_{2} \leq  C ||e^{N}||_{0}||\xi_{t}^{N}||_{0}.\label{ad94c}
\end{eqnarray}
On the other hand, note that $K_{2}$ can be written as
\begin{eqnarray}
K_{2}&=&\int_{\Omega} (B(u)-B(u^{N}))u_{x}\cdot(P_{N}^{10}\varphi)_{x}dx-\langle\partial_{x}\left(G(u^{N})-G(u)\right),P_{N}^{10}\varphi\rangle\nonumber\\
&&-\langle\left(\gamma(u^{N})\nonumber-\gamma(u)\right),P_{N}^{10}\varphi\rangle.\label{des4}
\end{eqnarray}
Therefore, the orthogonality properties of the projection $P_{N}^{10}$, Remark \ref{remark1}, (\ref{ad92}), and Theorem \ref{theorem21} imply that (\ref{des4}) can be estimated as
\begin{eqnarray}
|K_{2}|\leq C||e^{N}||_{0}||\varphi||_{2} \leq  C ||e^{N}||_{0}||\xi_{t}^{N}||_{0}.\label{ad94d}
\end{eqnarray}
Finally, $K_{3}$ is written as
\begin{eqnarray}
K_{3}=\int_{\Omega}\left(A(u^{N})-A(u)\right)u_{tx}\cdot(P_{N}^{10}\varphi)_{x}dx,\label{des5}
\end{eqnarray}
and again hypothesis (H1), orthogonality properties of the projection, and Theorem \ref{theorem21} are used to estimate (\ref{des5}) as
 \begin{eqnarray}
|K_{3}|\leq C||e^{N}||_{0}||\varphi||_{2} \leq  C ||e^{N}||_{0}||\xi_{t}^{N}||_{0}.\label{ad94e}
\end{eqnarray}
Now, applying (\ref{ad94}), (\ref{ad94b}), (\ref{ad94c}), (\ref{ad94d}), and (\ref{ad94e}) to (\ref{ad93}) it holds that
\begin{eqnarray*}
||\xi_{t}^{N}||_{0}\leq  CN^{-1}\left(||\xi_{t}^{N}||_{1}+||e^{N}||_{1}\right)
+C||e^{N}||_{0}.
\end{eqnarray*}
Thus, (\ref{ad37c}) follows from
(\ref{ad_229}), (\ref{ad37b}), 
$e^{N}=\eta-\xi^{N}$, Gronwall's lemma, and Theorem 
\ref{theorem21}. 
\end{proof}
\begin{remark}
\label{remark2}
The additional hypothesis on $B$ can be removed by using a 
similar argument to that in \cite{ArnoldDT1981}.
\end{remark}

\subsection{Full discretization with SSP methods}
\label{sec32}
As for the time discretization of the spectral ode semidiscrete systems from (\ref{semidL1}), we may consider, as in  \cite{EAAD20},  the singly diagonally implicit Runge-Kutta (SDIRK) 
methods of Butcher tableau
\begin{eqnarray}
\label{sdirk}
\begin{array}{c | cc}
\mu& \mu & 0  \\[2pt]
1-\mu& 1-2\mu& \mu\\[2pt]
\hline
\\[-9pt]
 & \frac{1}{2}  & \frac{1}{2}
 \end{array}
\end{eqnarray}
with $\mu=1/2$ (implicit midpoint rule, order 
two) and $\mu=\frac{3+\sqrt{3}}{6}$ (order three). Among other properties, the methods are A-stable (and therefore L-stable).

As explained in \cite{EAAD20}, methods like (\ref{sdirk}) are useful to prevent the possibility of oscillatory stiff phenomena in (\ref{semidL1}) or the corresponding spectral semidiscrete systems from nonregular data and when the hyperbolic terms are dominant. This is because of two main reasons: they are dispersive of high order (generating small dispersion errors of the oscillations in the numerical approximation, cf. e.~g. \cite{IzzoJ2021} and references therein for details) and they have the so-called strong stability preserving property, see e.~g. 
\cite{Gotlieb2005}. We observe that the SDIRK methods 
(\ref{sdirk}) are SSP methods and both were shown to be optimal 
(within the corresponding SDIRK schemes with the same stages and order). 

While the property of generation of small dispersive errors seems to be intrinsic to the temporal discretization, the SSP property somehow depends on the stability of full discretization of the semidiscrete system when this is numerically integrated in time with the forward Euler method. 
It may be worth analyzing this point in a more detailed way, cf. \cite{EAAD20}. We first consider the Legendre spectral Galerkin discretization of the scalar problem
\begin{equation}
\partial_{t}u+\partial_{x}f(u)=\epsilon \partial_{xx}u+\delta \partial_{xxt}u,\; x\in [-1,1], 0\leq t\leq T,\label{filt8}
\end{equation}
where $\epsilon,\delta>0$, along with homogeneous Dirichlet boundary conditions and initial data $u(x,0)=u_{0}(x)$. In (\ref{filt8}), the flux $f$ is assumed to be locally Lipschitz with $f(0)=0$. In the G-NI formulation, the corresponding system for the semidiscrete solution $u^{N}$ can be written in the form, \cite{EAAD20}
\begin{equation*}
(I_{N-1}-\delta\widetilde{D}_{N}^{(2)})\frac{d}{dt}\widetilde{U}(t)=\epsilon \widetilde{D}_{N}^{(2)}\widetilde{U}(t)+ \widetilde{D}_{N}^{(1)}\widetilde{f}(U(t)),\;t>0,\label{filt9}
\end{equation*}
where $U(t)=(U_{0}(t),\ldots,U_{N}(t))^{T}, U_{j}(t)=u^{N}(x_{j},t), j=0,\ldots,N$, $I_{N-1}$ is the $(N-1)\times (N-1)$ identity matrix, $D_{N}^{(1)}, D_{N}^{(2)}$ denote, respectively, the first- and second-derivative matrix at the Legendre-Gauss-Lobatto nodes $x_{j}, 0\leq j\leq N$; the computation of $f(U)$ is componentwise, and the tilde means that the first and last rows and columns (for matrices) and the first and last components (in the case of vectors) are removed due to the homogeneous boundary conditions. For a temporal discretization $t_{n}=n\Delta t, n=0,1,\ldots$, if $U_{FE}^{n}$ denotes the approximation to $U(t_{n})$ given by the forward Euler method, then
\begin{equation}
U_{FE}^{n+1}=(I_{N-1}+\epsilon\Delta t C_{N-1}^{-1}\widetilde{D}_{N}^{(2)})U_{FE}^{n}-\Delta t C_{N-1}\widetilde{D}_{N}^{(1)}\widetilde{f}(U_{FE}^{n}),\; n=0,1,\ldots,\label{filt10}
\end{equation}
with $U_{FE}^{0}=U(0)$ and where $C_{N-1}=I_{N-1}-\delta\widetilde{D}_{N}^{(2)}$ is assumed to be invertible, \cite{CanutoHQZ1988}. From the properties of $f$, we can find a constant $C=C(||U(0)||)$ such that
\begin{equation}
||\widetilde{f}(U_{FE}^{0})||\leq C||\widetilde{U}_{FE}^{0}||,\label{filt11}
\end{equation}
where $||\cdot ||$ denotes the Euclidean norm in $\mathbb{R}^{N-1}$. Using (\ref{filt11}) and induction on $n$ in (\ref{filt10}), a first approach shows that the SSP condition
\begin{equation*}
||U_{FE}^{n+1}||\leq ||U_{FE}^{n}||,\; n=0,1,\ldots,
\end{equation*}
holds when $\Delta t=\Delta t_{FE}$ satisfies
\begin{equation}
||I_{N-1}+\epsilon\Delta t C_{N-1}^{-1}\widetilde{D}_{N}^{(2)}||+\delta t C || C_{N-1}\widetilde{D}_{N}^{(1)}||\leq 1.\label{filt12}
\end{equation}
The extension of (\ref{filt12}) to systems of the form
\begin{equation*}
\partial_{t} u+\partial_{x}f(u)=B\partial_{xx}u+A\partial_{xxt}u,
\end{equation*}
for $u\in\mathbb{R}^{d}$, $A, B$ $d\times d$ matrices with $A$ positive definite, and $f$ locally Lipschitz with $f(0)=0$, is straightforward.

\subsubsection{Formulation of the fully discrete schemes}
The full discretization of (\ref{semidL1}) with (\ref{sdirk}) takes the following form. Let $F(U):=L(u^{N})U+\mathcal{H}(u^{N})$ and assume that $K(u^{N})$ is nonsingular. Let $\mathcal{F}(u^{N}):=K(u^{N})^{-1}F(u^{N})$. If $t_{n}=n\Delta t, n=0,1,\ldots$, then the numerical integration of (\ref{semidL1}) with the methods (\ref{sdirk}) has the form
\begin{eqnarray}
u^{*}&=&u^{n}+\mu\Delta t\mathcal{F}(u^{*}),\label{fullyd1}\\
u^{**}&=&u^{n}+(1-2\mu)\Delta t\mathcal{F}(u^{*})+\mu\Delta t\mathcal{F}(u^{**}),\label{fullyd2}\\
u^{n+1}&=&u^{n}+\frac{\Delta t}{2}\left(\mathcal{F}(u^{*})+\mathcal{F}(u^{**})\right),\nonumber
%\label{fullyd3}
\end{eqnarray}
where $u^n$ denotes an approximation of the vector $U=(U_{jk}), j=1,2,\ldots,d,\; k=1,\ldots,N-1$, at $t=t_{n}$.
%\subsection{Implementation details}
Each of the implicit systems in (\ref{fullyd1}), (\ref{fullyd2}) is solved with the classical fixed point algorithm
\begin{eqnarray}
u^{[\nu+1]}=\widetilde{u}^{n}+\mu\Delta t \mathcal{F}(u^{[\nu]}),\; \nu=0,1,\ldots,\label{fullyd4}
\end{eqnarray}
where $\widetilde{u}^{n}=u^{n}$ in the case of (\ref{fullyd1}) and $\widetilde{u}^{n}=u^{n}+(1-2\mu)\Delta t\mathcal{F}(u^{*})$ for (\ref{fullyd2}). The iteration (\ref{fullyd4}) is solved indeed as
\begin{eqnarray}
K(u^{[\nu]})X=\mu\Delta t F(u^{[\nu]}),\quad u^{[\nu+1]}=X+\widetilde{u}^{n},\; \nu=0,1,\ldots\label{fullyd5}
\end{eqnarray}
Due to the structure of the matrices $K^{(0)}$, $K^{(1)}$ and $K^{(2)}$, the system (\ref{fullyd5}) can be solved by $N\times N$ blocks. For example, in the case $d=2$, if (\ref{fullyd5}) is written in the form
\begin{eqnarray*}
\begin{pmatrix} K_{11}&K_{12}\\K_{21}&K_{22}\end{pmatrix}\begin{pmatrix}X_{1}\\X_{2}\end{pmatrix}=\mu\Delta t\begin{pmatrix}F_{1}\\F_{2}\end{pmatrix},
\end{eqnarray*}
where $K_{ij}\in \mathbb{R}^{N\times N}$, then the steps of the resolution may be as follows:
\begin{enumerate}
\item Factorization
\begin{eqnarray*}
\begin{pmatrix} K_{11}&K_{12}\\K_{21}&K_{22}\end{pmatrix}=\begin{pmatrix} K_{11}&0\\K_{21}&L_{22}\end{pmatrix}\begin{pmatrix} I&K_{11}^{-1}K_{12}\\0&U_{22}\end{pmatrix},
\end{eqnarray*}
with $L_{22}\in\mathbb{R}^{N\times N}$ lower triangular and $U_{22}\in\mathbb{R}^{N\times N}$ upper triangular. 
(We need $K_{11}$ to be nonsingular.)  Last system yields
\begin{eqnarray*}
K_{22}=K_{21}K_{11}^{-1}K_{12}+L_{22}U_{22},
\end{eqnarray*}
so $L_{22}, U_{22}$ come from the $LU$ factorization of $K_{22}-K_{21}\widetilde{K}$, where  $K_{11}\widetilde{K}=K_{12}$. 
\item Resolution of
\begin{eqnarray*}
\begin{pmatrix} K_{11}&0\\K_{21}&L_{22}\end{pmatrix}\begin{pmatrix}Y_{1}\\Y_{2}\end{pmatrix}=\mu\Delta t\begin{pmatrix}F_{1}\\F_{2}\end{pmatrix},
\end{eqnarray*}
that is $$K_{11}Y_{1}=\mu\Delta t F_{1},\quad L_{22}Y_{2}=\mu\Delta t F_{2}-K_{21}Y_{1}.$$
\item Resolution of
\begin{eqnarray*}
\begin{pmatrix} I&K_{11}^{-1}K_{12}\\0&U_{22}\end{pmatrix}\begin{pmatrix}X_{1}\\X_{2}\end{pmatrix}=\begin{pmatrix}Y_{1}\\Y_{2}\end{pmatrix},
\end{eqnarray*}
that is $$U_{22}X_{2}=Y_{2},\quad X_{1}=Y_{1}-K_{11}^{-1}K_{12}X_{2}.$$
\end{enumerate}
%{This} procedure can be simplified in some particular cases (constant boundary conditions, $A$ and $B$ independent of $R^{N}$, etc).
%The resulting (general) code contains the following subroutines:
%\begin{itemize}
%\item Main code: {\bf trifas-LegSSP.m}
%\item Computation of initial data: {\bf initialC.m}
%\item Computation of matrices $A,B, G',\widetilde{\gamma}$: {\bf dataphaseAB.m}, {\bf dataphaseGg.m}
%\item Computation of matrices $K^{(1)}, K^{(2)}$: {\bf Kn1phase-G.m}, {\bf Kn2phase-G.m}
%\item Computation of source term $F(R)=L(R)R+\mathcal{H}(R)$: {\bf termphase-G.m}
%\item Computation of matrix $K$ and resolution of system $KX=\mu\Delta t F$: {\bf AphaseG.m}, {\bf solvephase.m}
%\end{itemize}
%The code implements the case of constant boundary conditions $A_{\pm}(t)=A_{\pm}, B_{\pm}(t)=B_{\pm}$. The inclusion of the non-constant case would involves the corresponding modifications in the subroutine of compoutation of the source term {\bf termphase-G.m} and in some inner instructions of the main code.
%
%\subsection{The splitting approach}

\section{Numerical experiments}
\label{sec4}
In this section we develop a computational study of the performance of the full discretization introduced and analyzed in section \ref{sec3}. The main goal of the numerical experiments is the illustration and investigation of several features: the spectral convergence of the semidiscretization revealed by Theorem \ref{theorem21} and the effects on the accuracy when some of the regularity hypotheses are lost. We will focus on the case $d=2$.
%\subsection{Numerical study of convergence}
%We first perform several numerical experiments to validate the code.
\subsection{Problem 1. Spectral convergence}
In order to illustrate the convergence of the method, we consider (\ref{eq:3psystem1a})-(\ref{eq:3psystem1c}) with $x_{L}=-\pi, x_{R}=\pi, A_{\pm}(t)=\pm \pi, B_{\pm}(t)=0$, and
\begin{eqnarray}
A=\begin{pmatrix} 2&1\\0&2\end{pmatrix},\quad
B=\begin{pmatrix} 1&0\\0&1\end{pmatrix},\quad
G(u,v)=\begin{pmatrix} uv\\u^{2}\end{pmatrix},\label{eq:41}
\end{eqnarray}
and $\gamma(u,v,x,t)$ such that the corresponding solution is given by
\begin{eqnarray}
u_{1}(x,t)=x+e^{-t}\sin{x},\quad u_{2}(x,t)=(1+t)\sin{x}.\label{eq:42}
\end{eqnarray}
Table \ref{t41} displays the corresponding errors at $T=1$ with $N=64$ and several time steps for the two time integrators.
\begin{table}[ht]
\begin{center}
\begin{tabular}{|c|c|c||c|c|}
    \hline
&    \multicolumn{2}{|c|} {$\mu=1/2$}& \multicolumn{2}{|c|}{$\mu=\frac{3+\sqrt{3}}{6}$}\\
\hline
{$\Delta t$} &{$L^{2}$ Error}&{$H^{1}$ Error}&{$L^{2}$ Error}&{$H^{1}$ Error}\\
\hline
0.1&1.2280E-03&3.2874E-03&6.3839E-05&1.8038E-04\\
0.05&3.0727E-04&8.2253E-04&8.0630E-06&2.2671E-05\\
0.0025&7.6833E-05&2.0567E-04&1.0119E-06&2.8395E-06\\
    \hline
\end{tabular}
\end{center}
\caption{Numerical approximation of (\ref{eq:3psystem1a})-(\ref{eq:3psystem1c}), (\ref{eq:41}): $L^{2}$ and $H^{1}$ 
norms of the error at $T=1$ with Legendre Galerkin 
method and $N=64$.}
\label{t41}
\end{table}
For this regular data, spectral accuracy in space is attained, since Table \ref{t41} shows the corresponding order of convergence of the time integrators.

A second example in this sense corresponds to taking
\begin{eqnarray}
A(u,v)=\begin{pmatrix} 4+u&0\\0&4+v\end{pmatrix},\quad
B(u,v)=\begin{pmatrix} u&v\\v&0\end{pmatrix},\quad
G(u,v)=\begin{pmatrix} uv\\u^{2}\end{pmatrix},\label{eq:43}
\end{eqnarray}
with the same boundary conditions as in the previous example and $\gamma$ such that the solution is also given by (\ref{eq:42}). As shown in Table \ref{t42}, a similar accuracy is observed. Note that the matrix $A$ in (\ref{eq:43}) does not satisfy in general the first condition in (H1), while $G$ is just locally Lipschitz, cf. (H2). This suggests that Theorem \ref{theorem32} may be valid under weaker conditions for the coefficients.

%satisfies the uniformly positive definiteness with respect to the solution (\ref{eq:42}), cf. (H1), but not in general. 
\begin{table}[ht]
\begin{center}
\begin{tabular}{|c|c|c||c|c|}
    \hline
&    \multicolumn{2}{|c|} {$\mu=1/2$}& \multicolumn{2}{|c|}{$\mu=\frac{3+\sqrt{3}}{6}$}\\
\hline
{$\Delta t$} &{$L^{2}$ Error}&{$H^{1}$ Error}&{$L^{2}$ Error}&{$H^{1}$ Error}\\
\hline
0.1&1.0270E-03&1.7528E-03&7.0755E-05&1.3089E-04\\
0.05&2.5684E-04&4.3823E-04&9.1976E-06&1.7529E-05\\
0.0025&6.4215E-05&1.0956E-04&1.1758E-06&2.2791E-06\\
    \hline
\end{tabular}
\end{center}
\caption{Numerical approximation of (\ref{eq:3psystem1a})-(\ref{eq:3psystem1c}), (\ref{eq:43}): $L^{2}$ and $H^{1}$ 
norms of the error at $T=1$ with Legendre Galerkin 
method and $N=64$.}
\label{t42}
\end{table}
\subsection{Problem 2. Nonsmooth data}
A second point concerns the performance of the method for nonregular data. As a first illustration, we consider the following linear example. With the matrices $A$ and $B$ from (\ref{eq:41}), the interval $[-1,1]$, $A_{pm}=B_{pm}=0$, and $G=\gamma=0$, the exact solution has the form
\begin{eqnarray}
u_{1}(x,t)=\sum_{n=1}^{\infty} e^{t\alpha_{n}}\left(U_{n}^{(1)}+t\beta_{n}U_{n}^{(2)}\right)X_{n}(x),\;
u_{2}(x,t)=\sum_{n=1}^{\infty} e^{t\alpha_{n}}U_{n}^{(2)}X_{n}(x),\label{eq:44}
\end{eqnarray}
where
\begin{eqnarray*}
&&X_{n}(x)=\sin\frac{n\pi}{2}(x+1),\quad n=1,2,\ldots, x\in [-1,1],\\
&&\alpha_{n}=\frac{-\lambda_{n}}{1-2\lambda_{n}},\quad \beta_{n}=-\alpha_{n}^{2},\quad \lambda_{n}=-\left(\frac{n\pi}{2}\right)^{2},\; n=1,2,\ldots,
\end{eqnarray*}
and
\begin{eqnarray*}
U_{j}(x)=\sum_{n=1}^{\infty} U_{n}^{(j)}X_{n}(x),\quad j=1,2,\quad x\in [-1,1].
\end{eqnarray*}
This allows to take nonsmooth initial data $U_{j}(x)$ and compare the numerical solution with a truncation of (\ref{eq:44}), considered as \lq exact\rq\ solution.

Thus, for the initial conditions
\begin{eqnarray}
U_{1}(x)=U_{2}(x)=\left\{\begin{matrix}1&|x|\leq 1/2\\0&{\rm otherwise}\end{matrix}\right.,\label{eq:45} 
\end{eqnarray}
and $\Delta t=h/2$, Table \ref{t43} shows the $L^{2}$ and $L^{\infty}$ errors of the two methods with respect to a truncated representation of (\ref{eq:44}), and for several values of $N$.
\begin{table}[ht]
\begin{center}
\begin{tabular}{|c|c|c||c|c|}
    \hline
&    \multicolumn{2}{|c|} {$\mu=1/2$}& \multicolumn{2}{|c|}{$\mu=\frac{3+\sqrt{3}}{6}$}\\
\hline
{$N$} &{$L^{2}$ error}&{$L^{\infty}$ error}&{$L^{2}$ error}&{$L^{\infty}$ error}\\
\hline
8&6.0707E-03&6.3989E-03&6.1858E-03&6.5514E-03\\
32&1.2539E-03&1.3093E-03&1.2601E-03&1.3210E-03\\
128&2.9529E-04&3.4544E-04&2.9565E-04&3.4619E-04\\
    \hline
\end{tabular}
\end{center}
\caption{Numerical approximation of (\ref{eq:3psystem1a})-(\ref{eq:3psystem1c}) from (\ref{eq:45}): 
$L^{2}$ and $L^{\infty}$ norms at $T=1$ of the error 
with Legendre Galerkin method and $\Delta t=0.5 h, h=2/N$. }
\label{t43}
\end{table}
As in the scalar case, \cite{EAAD20}, there is a dominant error in space, of order $O(N^{-1})$. Figure \ref{f41} shows the form of the components of the numerical solution at $T=1$.
\begin{figure}[ht!]
\centering
\subfigure[]
{\includegraphics[width=0.45\textwidth]{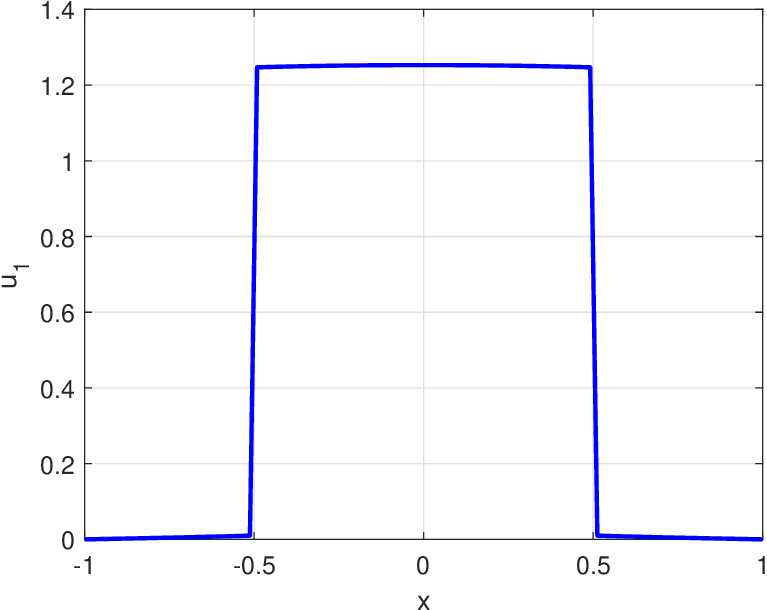}}
\subfigure[]
{\includegraphics[width=0.45\textwidth]{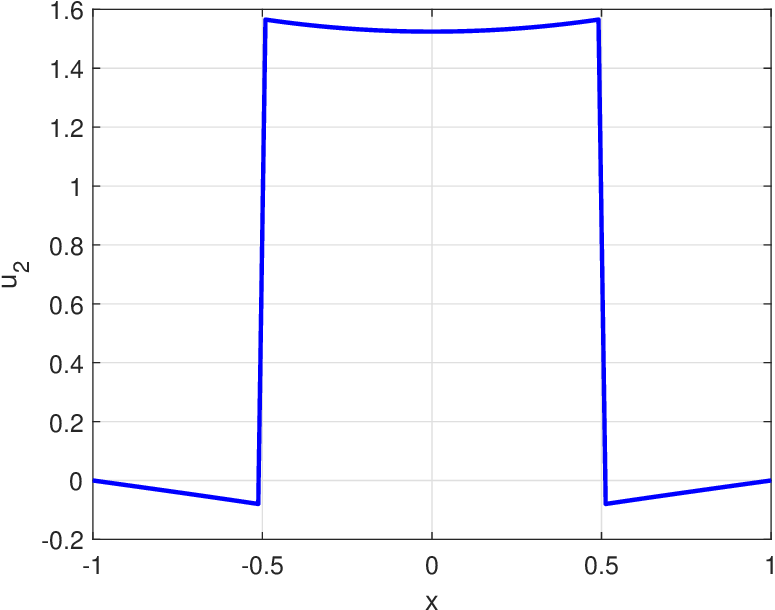}}
%\subfigure[$t=0.6$]
%{\includegraphics[width=0.45\textwidth]{fig1NLGt06.eps}}
%\subfigure[$t=1$]
%{\includegraphics[width=0.45\textwidth]{fig1NLGt1.eps}}
\caption{Numerical solution with Legendre Galerkin 
and SSP23 for the problem(\ref{eq:3psystem1a})-(\ref{eq:3psystem1c}) from (\ref{eq:45}) at $t=1$ with $\Delta t=0.025$.}
\label{f41}
\end{figure}

%{\bf Remark}: Not all the values of $N$ lead to a $O(N^{-1})$ behaviour of Table \ref{t43}. I did not find the reason but I think this may have more to do with how the truncated solution is computed, since $\Delta t=O(h)$ seems to be enough for having stability. (For example, the numerical approximation does not look to be affected by Gibbs phenomenon, as it sometimes happen with the truncated solution-even with F\'ejer summation-. This numerical behaviour is supposed to be one of the consequences of using SSP methods.)

As a second example of the influence of the regularity, we consider the initial data
\begin{eqnarray}
U_{1}(x)=1-|x|,\quad U_{2}(x)=0.\label{eq:46}
\end{eqnarray}
\begin{table}[ht]
\begin{center}
\begin{tabular}{|c|c|c||c|c|}
    \hline
&    \multicolumn{2}{|c|} {$\mu=1/2$}& \multicolumn{2}{|c|}{$\mu=\frac{3+\sqrt{3}}{6}$}\\
\hline
{$N$} &{$L^{2}$ error}&{$L^{\infty}$ error}&{$L^{2}$ error}&{$L^{\infty}$ error}\\
\hline
16&6.6017E-04&9.6042E-04&6.8547E-04&1.0023E-03\\
32&1.6814E-04&2.5153E-04&1.7429E-04&2.6190E-04\\
64&4.2473-05&6.4026E-05&4.3990E-05&6.6600E-05\\
    \hline
\end{tabular}
\end{center}
\caption{Numerical approximation of (\ref{eq:3psystem1a})-(\ref{eq:3psystem1c}) from (\ref{eq:46}): 
$L^{2}$ and $L^{\infty}$ norms at $T=1$ of the error 
with Legendre Galerkin method and $\Delta t=0.5 h, h=2/N$. }
\label{t44}
\end{table}
In this case, the results from Table \ref{t44} seem to show an error in space of $O(N^{-2})$ (which, in the case of $\mu=1/2$ and since $\Delta t=O(h)$, coincides with the temporal error). The first component of the numerical solution at $T=1$ is shown in Figure \ref{f42}.
\begin{figure}[ht!]
\centering
%\subfigure[$t=0$]
{\includegraphics[width=0.55\textwidth]{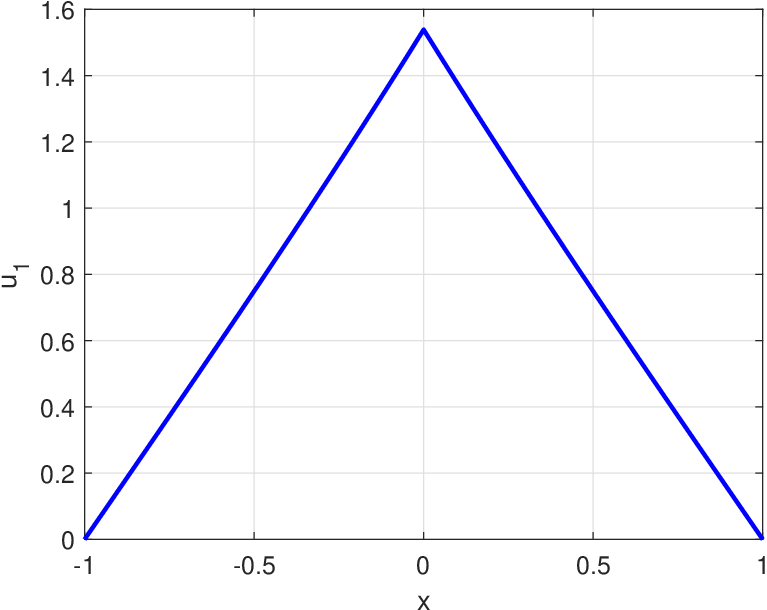}}
%\subfigure[$t=1$]
%{\includegraphics[width=0.45\textwidth]{Rep05_fig2.eps}}
%\subfigure[$t=0.6$]
%{\includegraphics[width=0.45\textwidth]{fig1NLGt06.eps}}
%\subfigure[$t=1$]
%{\includegraphics[width=0.45\textwidth]{fig1NLGt1.eps}}
\caption{First component of numerical solution with Legendre Galerkin 
and SSP23 for the problem(\ref{eq:3psystem1a})-(\ref{eq:3psystem1c}) from (\ref{eq:46}) at $T=1$ with $\Delta t=0.025$.}
\label{f42}
\end{figure}
These two examples suggest that the reduction of order is proportionally related to the reduction of regularity of the data.

The last experiments are concerned with (\ref{eq:3psystem1a})-(\ref{eq:3psystem1c}) where $x_{L}=-56, x_{R}=200, G^{L}=g^{R}=\gamma=0, B=0$, 
\begin{eqnarray}
A(u,v)=\begin{pmatrix} \frac{1}{1+u^{2}}\\0&\frac{1}{1+v^{2}}\end{pmatrix},\label{eq:47}
\end{eqnarray}
initial condition $u_{0}=(U_{1},U_{2})^{T}$ with
\begin{eqnarray*}
U_{1}(x)=\left\{\begin{matrix}0.1&|x|\leq 0\\0&x>0\end{matrix}\right.,\quad
U_{2}(x)=\left\{\begin{matrix}0.9&|x|\leq 0\\0&x>0\end{matrix}\right.,
\end{eqnarray*}
and two flux functions: the one given in (\ref{eq:41}) and
\begin{eqnarray}
G(u,v)=\begin{pmatrix} \displaystyle\frac{u^{2}}{\lambda(u,v)}\\\displaystyle\frac{v^{2}}{\lambda(u,v)}\end{pmatrix},\quad \lambda(u,v)=\alpha v+(1-\alpha)v^{2}+u^{2}+(1-u-v)^{2},\label{eq:48}
\end{eqnarray}
with $\alpha=0.1$. The corresponding numerical approximation at $T=50$ is shown in Figures \ref{f43} and \ref{f44}. In the first case, each of the components seems to evolve to a structure whose main elements is some wave of dispersive shock type, traveling to the right. In the case of Figure \ref{f44}, the evolution of the initial discontinuity seems different, with the generation of wave structures of several type plus dispersion in both directions. In both experiments the numerical approximation does not seem to develop any kind of numerical artifact, since the dispersion presented seems to be part of the theoretical evolution, \cite{CongyEHS2021}.

\begin{figure}[ht!]
\centering
\subfigure[]
{\includegraphics[width=0.45\textwidth]{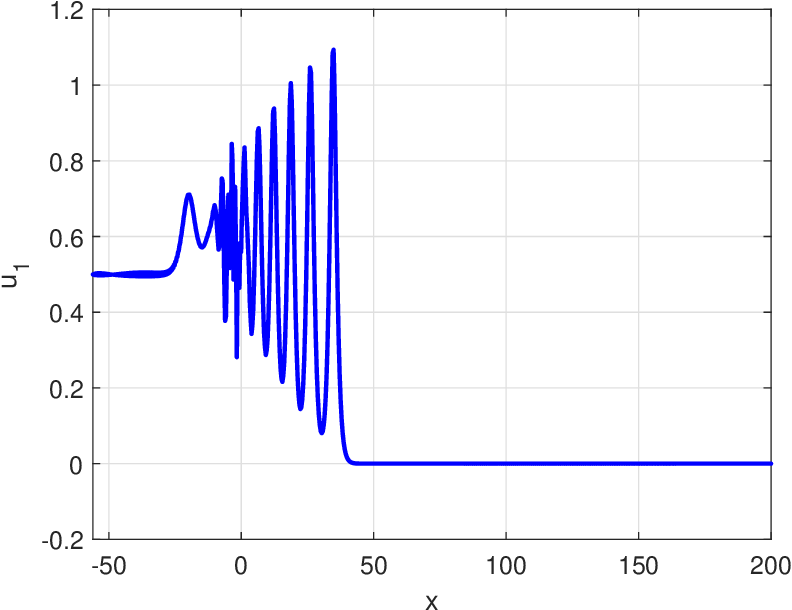}}
\subfigure[]
{\includegraphics[width=0.45\textwidth]{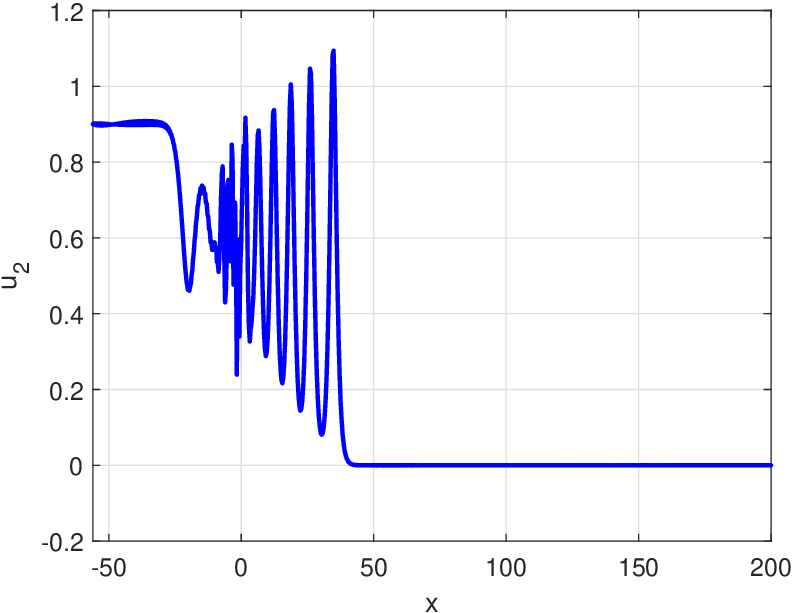}}
%\subfigure[$t=0.6$]
%{\includegraphics[width=0.45\textwidth]{fig1NLGt06.eps}}
%\subfigure[$t=1$]
%{\includegraphics[width=0.45\textwidth]{fig1NLGt1.eps}}
\caption{Numerical solution with Legendre Galerkin 
and SSP23 for the problem(\ref{eq:3psystem1a})-(\ref{eq:3psystem1c}) from (\ref{eq:47}) and $G$ given by (\ref{eq:41}) at $t=50$ with $\Delta t=0.025$.}
\label{f43}
\end{figure}
\begin{figure}[ht!]
\centering
\subfigure[]
{\includegraphics[width=0.45\textwidth]{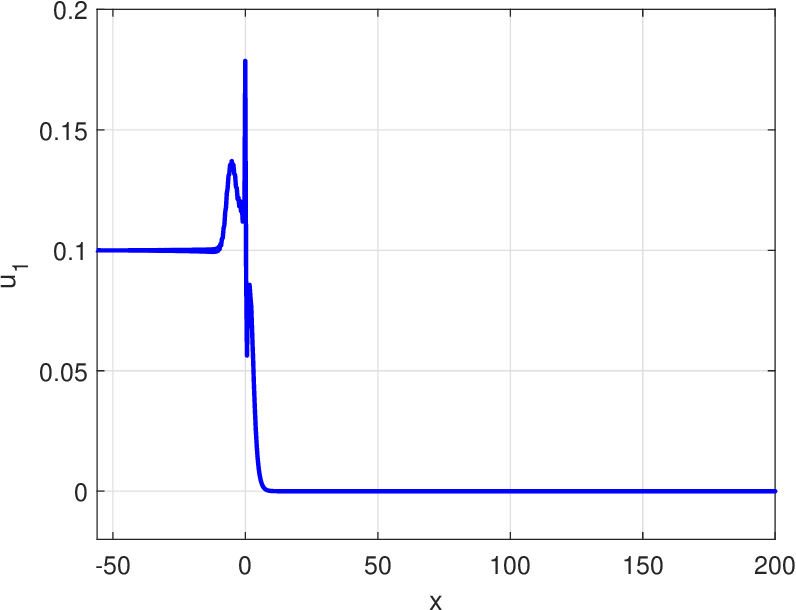}}
\subfigure[]
{\includegraphics[width=0.45\textwidth]{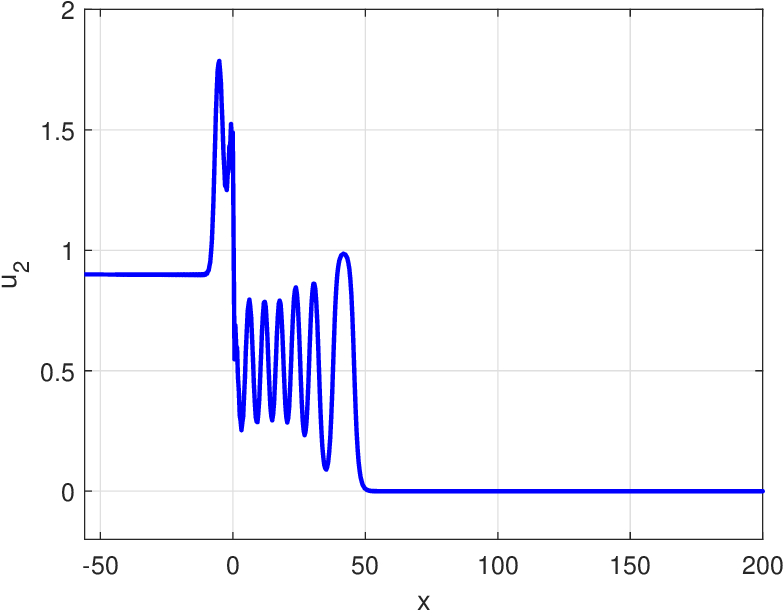}}
%\subfigure[$t=0.6$]
%{\includegraphics[width=0.45\textwidth]{fig1NLGt06.eps}}
%\subfigure[$t=1$]
%{\includegraphics[width=0.45\textwidth]{fig1NLGt1.eps}}
\caption{Numerical solution with Legendre Galerkin 
and SSP23 for the problem(\ref{eq:3psystem1a})-(\ref{eq:3psystem1c}) from (\ref{eq:48}) and $G$ given by (\ref{eq:48}) at $t=50$ with $\Delta t=0.025$.}
\label{f44}
\end{figure}

\section{Concluding remarks}
\label{sec5}
The present paper analyzes several aspects of the theory and numerical approximation of systems of pde's of htperbolic, pseudo-parabolic type. This kind of systems is characterized by the presence of terms of a combined character: hyperbolic, purely parabolic, and pseudo-parabolic, and then the models can be seen as diffusive-dispersive variants of conservation laws.

Focused on the one-dimensional ibvp with Dirichlet boundary conditions, the present study is divided into two parts. The first one introduces several mathematical properties: From a weak formulation of the problem, some results of well-posedness are proved. They include existence and uniqueness of solution, continuous dependence on the initial data, as well as a regularity result according to the level of smoothness of the elements of the equations. 
%The mathematical theory is completed with a study on the behaviour of the systems when considered as a regularized variant of a conservation law.

The second part of the paper is devoted to the numerical approximation. More specifically, the ibvp with Dirichlet boundary conditions is approximated in space with a spectral Galerkin discretization based on the Legendre polynomials. The semidiscrete approximation is shown to exist and two error estimates are proved. They depend on the degree of the polynomial approximation and the regularity of the solution. In particular, the smooth case leads to spectral convergence. On the other hand, the full discretization is completed with a temporal integration with strong stability character (SSP) and a high order of dispersion. Both properties are chosen to reduce that possible, spurious oscillatory behaviour in the simulation with nonregular data.

The performance of the resulting fully discrete scheme is computationally checked in a section of numerical experiments, with smooth and nonsmooth data. In the first case, spectral convergence is illustrated, even in examples where the elements, being smooth, do not satisfy some of the hypotheses required for the convergence result. This suggests that the error estimates still hold under less strict conditions. On the other hand, the experiments with nonsmooth data show an expected reduction of order, proportional to the decay of regularity. Furthermore, the evolution from some discontinuous initial conditions shows the formation of some dispersion which seems to be part of the solution and not some numerical artifact (of Gibbs type), since the stability seems to be mainly controlled by the properties of the time discretization.

The analysis and good performance shown in the present paper motivate us for a continuation of the work in several ways. The most immediate one is concerned with the application of the fully discrete scheme to the study of the dynamics of the three-phase hyperbolic, pseudo-parabolic transport system with non-equilibrium capillary pressures, which is currently in preparation, \cite{ACDL2024}. From a mathematical point of view, it is worth studying the extension of the convergence results of the spectral approach to semidiscretizations based on Jacobi polynomials associated to weights $w(x)=(1-x^{2})^{\mu}, -1<\mu<1$, which was considered in \cite{EAAD20}.  In particular, this would be useful to extend the spectral approach to ibvp's with another type of boundary conditions. The main point here is concerned with the comparison between the functionals
$$l(\varphi,\psi)=\int_{\Omega}\varphi_{x}(\psi w)_{x}dx,$$ (\cite{BernardiM1989,BernardiM1997})  and $$l_{a(u)}(\varphi,\psi)=\int_{\Omega}a(u)\varphi_{x}(\psi w)_{x}
dx,$$ for $u$ (fixed), $\varphi$ and $\psi$ in some weighted Sobolev space and some function $a=a(u)$ with suitable properties. Finally, a third line of future research consists of the extension of the results to the multi-dimensional case. Here, we think that the main drawback is computational, in the sense that the implementation will require the introduction of different tools to reduce the computational work, such as parallelization and dynamical low-rank approximation.

\section*{Acknowledgements}
The authors E. Abreu, A. Dur\'an and W. Lambert are supported by the 
Spanish Agencia Estatal de Investigaci\'on under Research Grant 
PID2020-113554GB-I00/AEI/10.13039/501100011033. E. Abreu is also supported by the
Brazilian National Council for Scientific and Technological Development (CNPq) (Grant No. 306385/2019-8) and
the State of S\~{a}o Paulo Research Foundation (FAPESP) (Grant No. 2022/15108-0). A. Dur\'an is also 
supported by the Junta de Castilla y Le\'on and FEDER funds (EU) 
under Research Grant VA193P20.

\appendix

\section{Traveling waves for the system (\ref{eq:3psystem1a}) from a conservation law}
\label{appen1}
The mathematical theory is completed here with a study on the behaviour of the systems when considered as a regularized variant of a conservation law.
It is a well-established fact that when we omit the diffusive and dispersive terms from Equation (\ref{eq:3psystem1a}), the resulting solution may display discontinuities within a finite time frame, even when the initial data is smooth. In such scenarios, it is imperative to analyze the solutions in their weak form, as detailed in the reference \cite{DAF21}. It is worth noting that these weak solutions are inherently non-unique, and, consequently, the application of specific criteria becomes essential for the purpose of distinguishing and selecting unique solutions. This process of selection is crucial in order to make informed decisions about which solutions are most relevant or suitable for the given context or problem at hand.

Various selection criteria are available for the purpose of identifying the physical solutions. These criteria encompass the vanishing viscosity, entropy conditions, traveling waves, and kinetic conditions.

In this context, we will provide a brief overview of the "traveling waves" criterion, which aids in the selection of these discontinuities. In classical problem scenarios, this criterion plays a significant role in determining which solutions align with the physical behavior of the system. For these cases, we consider only diffusive effects, i.e., 
\begin{equation}
\partial_{t}u+\partial_{x}f(u)=\epsilon(b(u)u_{x})_{x}.\label{cl1b}
\end{equation}
where $\epsilon>0$, and the diffusion coefficient $b=b(u)$ is smooth and bounded below by a positive constant.
The theory concerning traveling waves for this class of equations is thoroughly established. When the discontinuity allows for the existence of traveling waves, we refer to the equation represented by $(\ref{cl1b})$ as exhibiting a viscous profile. Numerous studies delve into the existence of these traveling waves. In classical scenarios, those that conform to Lax's or Liu's conditions are noteworthy, as documented in references such as \cite{DAF21,SMOL94}.

In these classical cases, the traveling waves are termed \lq compressive\rq. In other words, the characteristic waves converge upon the discontinuities from both sides. Nevertheless, the concept of traveling waves broadens the spectrum of admissible solutions. There exist discontinuities that do not meet the stringent criteria of the Lax or Liu conditions but still accommodate the presence of traveling waves.

For instance, we can mention transitional shocks known as "undercompressive shocks." In such cases, the characteristic waves only impinge in a single direction across the shock, as elaborated in references like \cite{Marc90,MP2001}. This expansion of the concept of traveling waves introduces a more nuanced understanding of the dynamics of discontinuities within these equations.

Nonetheless, in various models, the presence of dispersive terms holds significant importance in understanding the underlying physics of the problem. One illustrative case is the consideration of a diffusive-dispersive scalar conservation law, which can be represented in the following form:
\begin{equation}
\partial_{t}u+\partial_{x}f(u)=\epsilon(b(u)u_{x})_{x}+\delta(a(u)\partial_{x}u_{t})_{x},\label{cl1}
\end{equation}
where $\epsilon, \delta>0$, the diffusion coefficient $b=b(u)$ is smooth and bounded below by a positive constant, and $a$ is smooth and bounded above and below by positive constants. 

Equations of this nature have been used in numerous models where dispersive effects play a pivotal role. Notable among these is the classic paper by Benjamin, Bona, and Mahony, \cite{benjamin1972model}. Their work shed light on the relevance of such equations, highlighting their importance in various physical scenarios.

Moreover, this class of equations garnered substantial attention in the realm of hyperbolic models, thanks to the pioneering efforts of P. Lefloch. In his comprehensive book \cite{Lefloch}, Lefloch delves into the adaptation of hyperbolic equations to those incorporating dispersive terms. This extensive work encompasses the study of traveling waves and provides valuable insights into the dynamics of these systems. Lefloch's contributions have played a significant role in advancing our understanding of dispersive effects within hyperbolic models.

To illustrate the construction and acquisition of traveling waves, let's explore the following example in which we focus on the cubic flux function, $f(u) = u^3$, with $a$ and $b$ both set to 1. This simplification reduces Equation $(\ref{cl1})$ to the form:

\begin{equation}
\partial_{t}u + \partial_{x}u^{3} = \epsilon u_{xx} + \delta u_{xxt}. \label{cl2}
\end{equation}

In order to derive traveling waves, we assume that the parameters $\epsilon$ and $\delta$ are small and positive.
Then, we can identify three distinct regimes:

\begin{enumerate}
\item In the first regime, we assume that $\alpha$ and $\epsilon$ are of the same order of magnitude. In this case, we can choose $\alpha = \epsilon/\sqrt{\delta}$ as a constant. This configuration results in an equilibrium between diffusive and dispersive terms.
\item In the second regime, we consider the scenario in which $\epsilon$ is much smaller than $\delta$, and moreover, $\epsilon$ approaches zero more rapidly than $\delta$. Here, the diffusion effect is weaker compared to the pronounced influence of dispersion.
\item In the final case, we investigate when $\delta$ is significantly smaller than $\epsilon$, with $\delta$ diminishing at a faster rate than $\epsilon$. In this context, the dispersion effect is notably weaker compared to diffusion.
\end{enumerate}

These different regimes offer insights into the interplay between diffusive and dispersive terms and provide a framework for studying the behavior of traveling waves in this specific model.

\subsection{The case that $\alpha = \epsilon/\sqrt{\delta}$}
Building upon the work presented in \cite{Lefloch}, our focus is on exploring traveling wave solutions of (\ref{cl2}) that connect two distinct states, denoted as $u^-$ and $u^+$, within the phase space. To characterize these solutions, we introduce a traveling profile denoted as $u(y)$, and we consider a self-similar variable:

\begin{equation}
y = \alpha \frac{x - \lambda t}{\epsilon} = \frac{x - \lambda t}{\sqrt{\delta}}, \quad \lambda \neq 0. \label{uvar}
\end{equation}

By substituting $u = u(y)$ with $y$ defined as in  (\ref{uvar}), into (\ref{cl2}) and applying the chain rule, we derive that the profile $y \mapsto u(y)$ must satisfy

\begin{equation}
-\lambda u_{y} + \partial_{y}u^{3} = \alpha u_{yy} - \lambda u_{yyy}. \label{cl3}
\end{equation}

Assuming that the traveling waves connect two equilibria, $u^-$ and $u^+$, i.e.,

\begin{equation}
\lim_{y \rightarrow \pm\infty} u(y) = u_{\pm}, \quad \lim_{y \rightarrow \pm\infty} u_{y}(y) = \lim_{y \rightarrow \pm\infty} u_{yy}(y) = 0, \label{eq1d}
\end{equation}

we can integrate (\ref{cl3}), leading to
\begin{equation}
-\lambda (u(y) - u_{-}) + (u(y)^{3} - u_{-}^{3}) = \alpha u_{y}(y) - \lambda u_{yy}(y). \label{cl4}
\end{equation}
Taking $y\rightarrow\infty$ and  the conditions in (\ref{eq1d}), we have
\begin{equation}
\lambda=\frac{u_{+}^{3}-u_{-}^{3}}{u_{+}-u_{-}}=u_{-}^{2}+u_{-}u_{+}+u_{+}^{2}.\label{shocksp}
\end{equation}
It's worth noting that the parameter $\lambda$ signifies the discontinuity speed according to the Rankine-Hugoniot condition, which for a conservation law 
\[
\partial_t u+\partial_x f(u)=0, 
\]
the speed $\lambda$ of a discontinuity connecting two states $u^-$ and $u^+$ is given by:
\begin{equation*}
\lambda=\frac{f(u^+)-f(u^-)}{u^+-u^-}.
\end{equation*}
Notice then that (\ref{shocksp}) denotes the velocity at which the traveling wave propagates and it is the speed of the discontinuity. Equation (\ref{cl4}) outlines the essential features of traveling wave solutions that link two equilibrium states. This equation serves as the foundation for investigating the dynamics of these waves within the specific context of the problem. 

To study the profile, first we fix $u_{-} > 0$ and use the speed $\lambda$ as a parameter. The line passing through $(u_{-},u_{-}^{3})$ with slope $\lambda$ intersects $f(u)=u^{3}$ in the state

\[
u^{3} = \lambda (u-u_{-})+u_{-}^{3},
\]
at three distinct points: $u_{0} = u_{-}$ and the roots $u_{1}$ and $u_{2}$ of
\[
u^{2} + uu_{-} + u_{-}^{2} = \lambda,
\]
which are
\[
u_{1} = \frac{1}{2}(-u_{-}+\sqrt{\Delta_{1}}) \quad \text{and} \quad u_{2} = \frac{1}{2}(-u_{-}-\sqrt{\Delta_{1}}), \quad\text{ with}\quad \Delta_{1} = 4\lambda-3u_{-}^{2}.
\]

The roots are real and satisfy
\[
u_{2}<u_{1}<u_{0},
\]
when $\lambda\in (3u_{-}^{2}/4,3u_{-}^{2})$. 
Since $u_0=u_-$, notice  also that $u_{0}+u_{1}+u_{2}=0$.

Now, our  objective is to ascertain the trajectory of the function $u(y)$ that connects the equilibrium point $u(-\infty) = u_- = u_0$ at $-\infty$ with the equilibrium point at $\infty$. Furthermore, we aim to identify which point, either $u_1$ or $u_2$, the trajectory will ultimately converge towards as it approaches $\infty$. To achieve this, we define a variable $v=v(y)$ satisfying 
\[
\frac{du}{dy}=v
\]
and we reconfigure (\ref{cl4}) into a system of equations for analysis and computation, as
\begin{equation*}
\frac{d}{dy}\begin{pmatrix}u\\v\end{pmatrix}=K(u,v)=\begin{pmatrix}v\\\frac{\alpha}{\lambda}v+g(u,\lambda)-g(u_{-},\lambda)\end{pmatrix},\label{cl5}
\end{equation*}
where $g(u,\lambda)=u-u^{3}/\lambda$. 

Notice that $v=0$ and $u_0$, $u_1$ and $u_2$ are equilibria for the flux $K(u,v)$, i.e, the function $K$ vanishes at the three equilibria $(u_{j},0), j=0,1,2$. By linearization of $K(u,v)$ at the equilibria, the eigenvalues of the Jacobian of $K(u,v)$ at any point $(u,0)$ are given by
\begin{eqnarray*}
\mu_{\pm}(u)=\frac{1}{2}\left(\frac{\alpha}{\lambda}\pm\sqrt{\Delta}\right),\;
\Delta=\Delta(u)=\frac{\alpha^{2}}{\lambda^{2}}+4g_{u}(u,\lambda)=\frac{\alpha^{2}}{\lambda^{2}}+4\left(1-3\frac{u^{2}}{\lambda}\right).
\end{eqnarray*}
Note that for $\alpha$ small enough, it holds that $\Delta(u_{j})>0, j=0,1,2$ for $\lambda\in (3u_{-}^{2}/4,3u_{-}^{2})$, with the eigenvalue $\mu_{+}(u_{j})>0$ for $j=0,1,2$. The analysis of the sign of $\mu_{-}$ leads to
\[ 
\mu_{-}(u_{0})>0,\quad \mu_{-}(u_{1})<0\quad\text{ and }\quad \mu_{-}(u_{2})>0.
\]
  In this case, we can see from the linearization that $(u_{0},0)$ and $(u_{2},0)$ are unstable equilibria (repulsors), while $(u_{1},0)$ is a saddle point. 
 Thus, one can construct a trajectory connecting  $u_{0}=u_-$ at $-\infty$, with  $u_{1}$ at $\infty$. 
The connection between a repulsor and a saddle point exhibits the classical Lax profile, and the same applies when we have a connection between a saddle point and an attractor. In this scenario, the shockwave can be characterized as a compressive shock. 

 After some computations, and following a similar construction made in \cite{Lefloch} (Chapter III, sec. 2), this happens when
\begin{equation}
u_{0}>\frac{2}{3}\sqrt{\frac{2}{\lambda}}\alpha,\;
\quad \text{ and }\quad u_{1}=-u_{0}+\frac{1}{3}\sqrt{\frac{2}{\lambda}}\alpha,\label{cl6}
\end{equation}
and the trajectory has the explicit form
\begin{eqnarray*}
u(y)=\frac{\alpha}{3\sqrt{2\lambda}}-\left(u_{-}-\frac{\alpha}{3\sqrt{2\lambda}}\right){\rm tanh}\left(\left(u_{-}-\frac{\alpha}{3\sqrt{2\lambda}}\right)y\sqrt{2\lambda}\right).
\end{eqnarray*}
The case we are discussing here pertains to the convex case of order 3. A comprehensive analysis of non-convex flux, exemplified by $f(u) = u - u^3$, can be found in detail in \cite{SSS15}. For a comprehensive theory on traveling waves with dispersive terms, we also recommend referring to \cite{EHS17}. A similar investigation was undertaken in \cite{13DPP}, focusing on the flux of Buckley-Leverett type, which constitutes a non-convex flux with a single inflection point. In both of these studies, the authors delve into the existence of non-classical traveling waves, particularly those that connect saddle-saddle points. These discontinuities feature traveling wave profiles that deviate from classical norms. The researchers also explore the presence of traveling waves with non-monotonic profiles. A similar behavior is discussed in a different context in \cite{EBL15}.

The general case presents even greater complexity. It necessitates an exploration of the various intersections between shock curves and the flux term $f(u)$. It is often more practical to examine individual cases within specific situations. Additionally, one must consider the asymptotic behavior, especially regarding whether diffusive terms or dispersive terms dominate the dynamics.

\subsection{The case $\delta<<\epsilon$}
In the case that there is a diffusive dominance in our model, we consider a self-similar variable
\begin{equation}
\eta = \frac{x - \lambda t}{\epsilon}. \label{uvar2}
\end{equation}
By substituting $u = u(\eta)$ with $\eta$ defined as in Equation (\ref{uvar2}) into Equation (\ref{cl2}) and applying the chain rule, we derive that the profile
\begin{equation}
-\lambda u_{\eta} + \partial_{\eta}u^{3} =  u_{\eta\eta} -\varepsilon \lambda u_{\eta\eta\eta}, \
\quad \text{ where } \quad \varepsilon =\frac{\delta}{\epsilon^2}. \label{cl3e}
\end{equation}

Assume that the traveling waves connect two equilibria, $u_-$ and $u_+$ and satisfy (\ref{eq1d}). We can integrate (\ref{cl3e}) yielding
\begin{equation}
-\lambda (u(\eta) - u_{-}) + (u(\eta)^{3} - u_{-}^{3}) = u_{\eta}(\eta) - \varepsilon\lambda u_{\eta\eta}. \label{cl4eta}
\end{equation}
Taking $y\rightarrow\infty$ and  the conditions in (\ref{eq1d}), we have the same value for $\lambda$ given by (\ref{shocksp}).  

Now, we can study the asymptotic behavior by considering the following series for $u(\eta)$ as
\begin{equation}
u(\eta)=u_0(\eta)+\varepsilon u_1(\eta)+\varepsilon^2 u_2(\eta)+\cdots. \label{expan}
\end{equation}
By substituting $(\ref{expan})$ in $(\ref{cl4eta})$ we obtain
\begin{equation*}
-\lambda (u_0+\varepsilon u_1+\cdots - u_{-}) + ((u_0+\varepsilon u_1+\cdots)^{3} - u_{-}^{3}) = (u_0+\varepsilon u_1+\cdots)_{\eta} - \varepsilon\lambda (u_0+\varepsilon u_1+\cdots)_{\eta\eta}. \label{cl4n}
\end{equation*}
By collecting the corresponding orders, we have
\begin{align}
\mathbb{O}(0) \quad & -\lambda (u_0 - u_{-}) + (u_0^{3} - u_{-}^{3}) = (u_0)_{\eta},\label{oz}\\
\mathbb{O}(\epsilon) \quad & -\lambda u_1  + 3u_1u_0^{2}+ \lambda (u_0)_{\eta\eta}= (u_1)_{\eta},\notag\\
%\label{o1}\\
\mathbb{O}(\epsilon^2) \quad & -\lambda u_2  + 3u_0(u_1^2+u_2u_0)+ \lambda (u_1)_{\eta\eta}= (u_2)_{\eta},
\label{o2}\\
\vdots\notag
\end{align}
where we assume that
\begin{equation*}
\lim_{\eta\longrightarrow -\infty}u_i(\eta)=\lim_{\eta\longrightarrow \infty}u_i(\eta), \quad \text{ for } \quad i=1,2,\cdots.\label{eqcon}
\end{equation*}
Notice that equations are non-linear for $\mathbb{O}(0)$, however, they are linear for higher orders. 

To obtain the asymptotic series, we first analyse (\ref{oz}). The equilibria for this equation is obtained when 
\[
-\lambda (u_0 - u_{-}) + (u_0^{3} - u_{-}^{3})=0.
\]
We need to determine which solutions correspond to $u_0 = u_-$, $u_0 = u_+$, and $u_0 = -(u_- + u_+)$. Upon analysis, it becomes evident that the equilibrium $u_0 = -(u_- + u_+)$ is never attained. Furthermore, we observe that equation (\ref{oz}) admits a solution when $u_- > u_+$, representing the classical profile. In such a scenario, a monotone profile emerges. The correction is obtained for the different orders of $\varepsilon$. In Figure $\ref{figc}$, we show an example for $u_-=2$, $u_+=1$. For this case, $\lambda=7$. We show the $\mathbb{O}(0)$, $\mathbb{0}(0)+\mathbb{O}(\varepsilon)$ and $\mathbb{O}(0)+\mathbb{O}(\varepsilon)+\mathbb{O}(\varepsilon^2)$. 

\begin{figure}[ht!]
\centering
%\subfigure[$t=0$]
{\includegraphics[width=0.7\columnwidth]{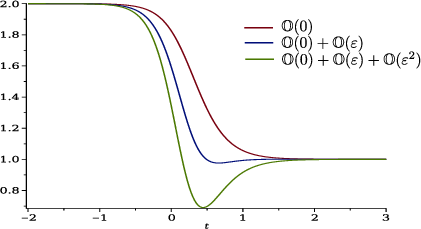}}
%\subfigure[$t=1$]
%{\includegraphics[width=0.45\textwidth]{FIG2D_1b.eps}}
%\subfigure[$t=0.6$]
%{\includegraphics[width=0.45\textwidth]{fig1NLGt06.eps}}
%\subfigure[$t=1$]
%{\includegraphics[width=0.45\textwidth]{fig1NLGt1.eps}}
\caption{The solution of $(\ref{oz})$-$(\ref{o2})$ for $u_-=2$, $u_+=1$ and $\lambda=2$. If we take $\varepsilon$ to zero, the solution converges to the classical wave satisfying the Lax's condition. The higher order terms correct the solution for non-monotone and non-classical connections.}
\label{figc}
\end{figure}

\subsection{The case $\epsilon<\delta$}
We complete the analysis of existence of traveling waves by studying the case with dispersive dominance. Now we consider the self-similar variable
\begin{equation}
\eta = \frac{x - \lambda t}{\sqrt{\delta}}. \label{uvar3}
\end{equation}
By substituting $u = u(\eta)$ with $\eta$, defined as in Equation (\ref{uvar3}), into (\ref{cl2}), and applying the chain rule, we derive that the profile satisfies
\begin{equation}
-\lambda u_{\eta} + \partial_{\eta}u^{3} =  \varepsilon u_{\eta\eta} -\lambda u_{\eta\eta\eta}, \
\quad \text{ where } \quad \varepsilon =\frac{\epsilon}{\sqrt{\delta}}. \label{cl3ee}
\end{equation}

Assuming that the traveling waves connect two equilibria, $u_-$ and $u_+$ and satisfy (\ref{eq1d}), we can integrate  (\ref{cl3ee}) to have
\begin{equation}
-\lambda (u(\eta) - u_{-}) + (u(\eta)^{3} - u_{-}^{3}) = \varepsilon u_{\eta}(\eta) - \lambda u_{\eta\eta}. \label{cl4etae}
\end{equation}
Taking $y\rightarrow\infty$ and  the conditions in (\ref{eq1d}), we have the same value for $\lambda$ given by(\ref{shocksp}).  
Substituting now the expansion (\ref{expan}) into (\ref{cl4etae}) leads to
\begin{eqnarray}
&&-\lambda (u_0+\varepsilon u_1+\cdots - u_{-}) + ((u_0+\varepsilon u_1+\cdots)^{3} - u_{-}^{3})=\nonumber\\
&& \varepsilon(u_0+\varepsilon u_1+\cdots)_{\eta} - \lambda (u_0+\varepsilon u_1+\cdots)_{\eta\eta}. \label{cl4ne}
\end{eqnarray}
By collecting the corresponding orders, it holds that
\begin{align}
\mathbb{O}(0) \quad & -\lambda (u_0 - u_{-}) + (u_0^{3} - u_{-}^{3}) = -\lambda (u_0)_{\eta\eta},\label{oze}\\
\mathbb{O}(\epsilon) \quad & -\lambda u_1  + 3u_1u_0^{2}-(u_0)_{\eta\eta}= -\lambda(u_1)_{\eta},\notag\\
%\label{o1e}\\
\mathbb{O}(\epsilon^2) \quad & -\lambda u_2  + 3u_0(u_1^2+u_2u_0)-(u_1)_{\eta\eta}= -\lambda(u_2)_{\eta}.\notag\\
%\label{o2e}\\
\vdots\notag
\end{align}
If $\displaystyle{\frac{du_0}{d\eta}=v_0}$, then (\ref{oze}) can be written as a system
%We can write $(\ref{oze})$ using systems by considering $\displaystyle{\frac{du_0}{d\eta}=v_0}$ and we have:
\begin{equation*}
\displaystyle{\frac{d}{d\eta}\begin{pmatrix}
u_0\\
v_0
\end{pmatrix}=
\begin{pmatrix}
v_0\\
\displaystyle{\frac{\lambda (u_0 - u_{-}) - (u_0^{3} - u_{-}^{3})}{\lambda}}
\end{pmatrix}},\label{dseo}
\end{equation*}
which admits equilibria $(0,u_{0})$, where
%It's essential to note that when considering equation (\ref{dseo}), we have equilibria at $v_0=0$ and 
$u_0=u_-$, $u_0=u_+$, and $u_0=-(u_-+u+)$. By examining the eigenvalues of the Jacobian matrix of the flux, we derive two distinct eigenvalues:

\begin{equation}
\mu_- = -\sqrt{\frac{\lambda-3u_0^2}{\lambda}}\quad\text{ and }\quad \mu_+ = \sqrt{\frac{\lambda-3u_0^2}{\lambda}}.
\end{equation}

Observe that when $\lambda < 3u_0^2$, the eigenvalues are imaginary. In this scenario, the equilibria are identified as centers, and there is no possibility of connection between them. However, if $\lambda > 3u_0^2,$ the equilibrium becomes a saddle point. Nevertheless, it is evident that, for any combination of $u_-$ and $u_+$, it is impossible to satisfy the condition $\lambda > 3u_0^2$ simultaneously for two different equilibria. Consequently, in this regime, there is no solution for equation (\ref{oze}), and traveling waves do not exist. This observation aligns with the results presented in \cite{EHS17}, highlighting the importance of diffusive terms to support the profile in this specific case.

\end{document}